\theoremstyle{plain}
\newtheorem{theorem}{Theorem}
\theoremstyle{definition}
\newtheorem{definition}{Definition}
\newtheorem{problem}{Problem}
\theoremstyle{remark}
\newtheorem{remark}{Remark}
\newcommand{\norm}[1]{\left\lVert#1\right\rVert}
\newcommand{\R}{\mathbb R}
\newcommand{\con}{u}
\newcommand{\U}{\mathcal U}
\newcommand{\Uad}{\mathcal U_\text{ad}}
\newcommand{\J}{J}
\newcommand{\C}{\mathcal C}
\newcommand\T{\rule{0pt}{2.6ex}}       
\newcommand\B{\rule[-1.2ex]{0pt}{0pt}} 
\title[]{Optimal control for interacting particle systems driven by neural networks} 
\author[S.~G\"ottlich]{Simone G\"ottlich}
\address[S.~G\"ottlich]{University of Mannheim}
\email[]{goettlich@uni-mannheim.de}
\author[C.~Totzeck]{Claudia Totzeck}
\address[C.~Totzeck]{University of Mannheim}
\email[]{totzeck@uni-mannheim.de}
\date{\today}
\begin{document}

\begin{abstract}
We propose a neural network approach to model general interaction dynamics and an adjoint based stochastic gradient descent algorithm to calibrate its parameters. The parameter calibration problem is considered 
as optimal control problem that is investigated from a theoretical and numerical point of view.
We prove the existence of optimal controls, derive the corresponding first order optimality system and formulate a stochastic gradient descent algorithm to identify parameters for given data sets. 
To validate the approach we use real data sets from traffic and crowd dynamics to fit the parameters. The results are compared to forces corresponding to well-known interaction models such as the Lighthill-Whitham-Richards model for traffic and the social force model for crowd motion.
\end{abstract}

\maketitle
\noindent
{\footnotesize {\bf Keywords.} optimal control; neural networks; parameter identification; data analysis}\\
{\footnotesize {\bf AMS Classification.} } 34H05; 92B20; 82C32 

\smallskip
	
\section{Introduction}

In the recent years many models for interaction dynamics with various applications such as swarming, sheep and dogs, crowd motion, traffic and opinion dynamics have been proposed, see e.g.~\cite{AlbiPareschi,Schafe2,Schafe1,CarrilloSwarming,CrisPicTos2014,CuckerSmale,PareschiToscani2013,toscani2006kinetic,CollisionAvoidance} for an overview. Typically, the models are based on ordinary differential equations (ODEs)
which describe the dynamics of each particle (or agent) in the system by interaction forces. 
A common approach is to model forces that replicate observations made in nature. For example in swarming, there exists the well-known three-phase model that consists of a short range repulsion, a neutral/comfort zone and a third zone with attraction for long range interactions \cite{CarrilloSwarming}. The ranges can be adjusted with the help of parameters that need to be fitted for every application. Certainly, this is a smart approach and leads to promising results. Nevertheless, it is interesting to overboard all those assumptions and instead use a neural network to model the interactions with parameters based on data. 

Clearly, the idea of using neural networks as substitutes for static/dynamic models or observers is well-known, see e.g~\cite{Chen2018,Haber_2017} in the general context of ODEs. Furthermore, in \cite{Brakes} neural networks are discussed as alternative to estimate friction in automotive brakes. See \cite{Energy} for a survey of artificial neural networks in energy systems. Moreover, in \cite{Gruene1} deep neural networks are used to approximate Lyapunov functionals for systems of ordinary differential equations and in \cite{Gruene2} the same author proposes to store approximate Lyapunov functions in a deep neural network in order to overcome the curse of dimensionality.

We intend to follow a similar approach for interacting particle systems in this paper. We first propose a  framework to model very general particle interactions with the help of neural networks. Then we state the corresponding parameter calibration problem. It enables us to identify the parameters using techniques from optimal control. We prove the well-posedness of the identification problem and derive the corresponding first order optimality conditions that are used to compute the gradient for the stochastic descent algorithm. 

In addition to the neural network model, the proposed parameter calibration approach can be used for general interacting particle systems with explicitly given, differentiable interaction forces, for example in terms of gradients of a potential. For our experiments, we apply the stochastic descent algorithm to real  data sets from traffic and crowd experiments \cite{dataTraffic,PedestrianData} to fit the neural network parameters. For comparison we use the same algorithm to fit as well as parameters of well-known interaction models, such as the Lighthill-Whitham-Richards (LWR) model for traffic \cite{LWR_HR} and the social force model for crowd dynamics \cite{HelbingMolnar1998}.
The two applications discussed here are prototypical examples for a first order approach in 1d and second order approach in 2d, respectively. Moreover, in both cases real data is available for our calibration purposes.

Similar parameter identification studies for pedestrian models haven been recently introduced in \cite{Corbetta2015,GomesStuartWolfram} using a Bayesian probabilistic method and in \cite{GoettKnapp,Tordeux2019} using neural networks. In contrast to \cite{GoettKnapp,Tordeux2019}, where the pedestrian speed or unknown interaction forces have been estimated, we address a more general setting that also allows for theoretical investigations and a rigorous numerical treatment. More precisely, for the neural networks we make some basic assumptions, but it is important to note that we do not prescribe any physical interaction assumptions. Using techniques from optimal control, we derive and analyze a parameter identification procedure that is based on stochastic gradient descent in Section~\ref{sec:optprob}. In Section~\ref{sec:anaopt}, we begin with well-posedness results concerning a general interacting particle system that is driven by a artificial neural network. Then we prove the existence of an optimal control for the parameter identification problem. Moreover, in Section~\ref{stochgrad}, we derive the first-order optimality system for the identification problem in order to state the corresponding algorithm, an adjoint-based stochastic gradient descent method. A proof of existence of the adjoint concludes the theory part. 

To validate our approach, we present an extended parameter estimation study for the traffic and the pedestrian model in Section~\ref{traffic} and \ref{crowd}, respectively. We apply the algorithm to a traffic scenario and estimate the interaction force as well as the speed of cars. The second application is based on pedestrian data, here again we train the artificial neural network with the help of our algorithm and compare the results to optimal parameters resulting from interactions that involve the social model for pedestrian interaction. The numerical results for both applications offer interesting insights and gives rise for future considerations.

\section{Optimal Control problem}\label{sec:optprob}

The parameter identification is cast as optimal control problem. 
Let $\con$ denote the control variables, or, in terms of the application, the parameters to be identified and $z$ the reference data set. We denote the space of controls by $\U = \R^K.$ Then we are interested in 
\[	
\min\limits_{\con \in \U} \J(x(\con);z)
\]
for some tailored cost function $\J$. 
As the parameters enter the cost function implicitly through the state $x,$ we propose to compute the gradient of the cost functional used for a stochastic gradient decent method with the help of an adjoint-based approach.  
 
From now on, it is clear that $x$ depends on the parameters $\con$, so for notational convenience we drop the dependence in some equations.
Let us assume that for each cost evaluation, we consider $N$ agents with corresponding trajectories $z_i \colon [0,T] \rightarrow \R^d$ for $i=1,\dots,N.$ Based on the applications we have in mind, we obtain parameter dependent trajectories $x_i^\con \colon [0,T] \rightarrow \R^d$ for $i=1,\dots,N.$  We focus on the cost functional
$$ \J(x(\con);z) = \frac12 \int_0^T \norm{z(t) - x(t) }^2 dt = \frac12 \norm{z - x(\con)}_{L^2(0,T)}^2. $$

The state $x = x(\con)$ is a solution to an ODE system that is driven by a feed-forward artificial neural network (NN) as follows
\begin{equation}\label{ODE}
	\frac{d}{dt} x_i = \sum_{j=1}^N W^{i,j}_\con(x_j -x_i), \quad x_i(0) = z_0^i,\quad i=1,\dots,N,
\end{equation}	
where $W_\con$ models the interaction of the agents and is the output of a neural network parametrized by $\con.$

\begin{remark}
Throughout the work, we assume that every agent is driven by the same artificial neural network. We need the index $W^{i,j}$ only for the applications. For example in the traffic dynamic, the cars only interact with the car in front. This leads to $W^{i,j}$ being nonzero only for $j = i+1.$ In contrast, the car in front drives with fixed velocity, this can be represented with fixed weights, not involved in the parameter identification. Moreover, pedestrians interact with every other person which yields $W^{i,j} = W.$ 
\end{remark}

To summarize, the optimal control problem we propose for the parameter identification driven by neural networks reads
\begin{problem}\label{optProb}
	Find $\bar \con \in \Uad := [-1,1]^K$ such that 
	$$ \J(x(\bar \con),z) = \min_{\con \in \U} \J(x(\con),z) \quad \text{subject to \eqref{ODE}.}  $$
\end{problem}

\begin{remark}\label{rem:costfunctional}
	We emphasize that in contrast to \cite{GoettKnapp} the cost functional is not of the usual structure given by $$ \frac1m \left( \sum_{i=1}^m \J(h_\con(x_i),y_i) \right),  $$
	where $h_\con(x_i)$ denotes the output of a neural network defined by the parameters $\con.$ Indeed, in the present article, the trajectories $z_i(\con), i=1,\dots,N$ are not the output of the neural network, but solutions of ODEs that are driven by the neural network. 
\end{remark}

As mentioned above, we consider feed-forward artificial neural networks. For later reference we define these as follows.
\begin{definition}\label{def:NN}
A \textit{feed-forward artificial neural network (NN)} is characterized by the following structure:
\begin{itemize}
	\item [-] Input layer: $$a_1^{(1)} = 1, \quad a_k^{(1)} = x_{k-1},\; \text{ for }k\in \{ 2,\dots, n(1)+1\},$$
	where $x \in \R^{n^{(1)}}$ is the input (feature) and $n^{(1)})$ is the number of neurons without the bias unit $a_1(1).$
	\item [-] Hidden layers: $$ a_1^{(\ell)} = 1,\quad a_k^{(\ell)} = g^{(\ell)}\left( \sum_{j=1}^{n^{(\ell -1)}+1} \con_{j,k}^{(\ell-1)} a_j^{(\ell-1)} \right) $$
	for $\ell \in \{ 2,\dots, L-1 \}$ and $k \in \{2,\dots,n^{(\ell)} +1 \}. $
	\item [-] Output layer: $$a_k^{(L)} = g^{(L)} \left(\sum_{j=1}^{n^{(L -1)}+1} \con_{j,k}^{(L-1)} a_j^{(L-1)}\right) $$
	for $k \in \{1,\dots, n^{(L)}\}$
\end{itemize}
\end{definition}

Note that the output layer has no bias unit. The entry $\con_{j,k}^{\ell}$ of the weight matrix $\con^{(\ell)} \in \R^{n^{(\ell-1)} \times n^{(\ell)}}$ describes the weight from neuron $a_{j}^{(\ell-1)}$ to the neuron $a_k^{(\ell)}$. For notational convenience, we assemble all entries $\con_{j,k}^{(\ell)}$ in a vector $\R^K$ with $$K := n^{(1)}\cdot n^{(2)} + n^{(2)} \cdot n^{(3)} + \dots + n^{(L-1)} \cdot n^{(L)}.$$ For the numerical experiment we use $g^{(\ell)}=\log(1+e^x)$ for $\ell =2,\dots,N-1$ and $g^{(L)}(x)=x.$
An illustration of an NN with $L=3,$ four inputs and six units in the hidden layer can be found in Figure~\ref{fig:NN}.

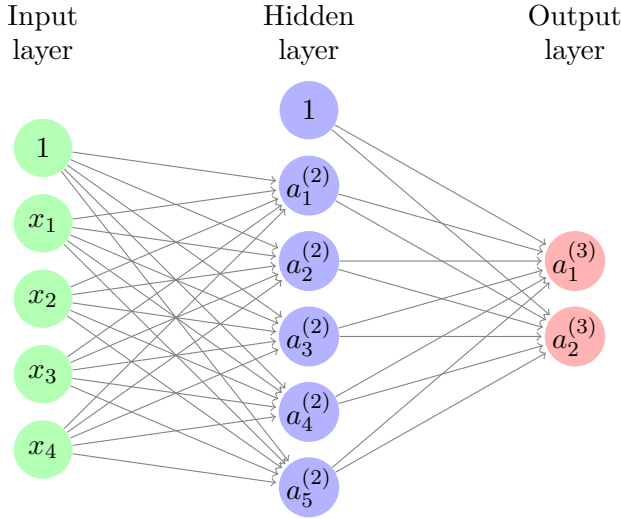
\begin{figure}[ht]
\def\layersep{3.5cm}
\begin{tikzpicture}[shorten >=1pt,->,draw=black!50, node distance=\layersep]
	\tikzstyle{every pin edge}=[<-,shorten <=1pt]
	\tikzstyle{neuron}=[circle,fill=black!25,minimum size=22pt,inner sep=0pt]
	\tikzstyle{bias neuron1}=[neuron, fill=green!30];
	\tikzstyle{input neuron}=[neuron, fill=green!30];
	\tikzstyle{output neuron}=[neuron, fill=red!30];
	\tikzstyle{hidden neuron}=[neuron, fill=blue!30];
	\tikzstyle{bias neuron2}=[neuron, fill=blue!30];
	\tikzstyle{annot} = [text width=4em, text centered]
	
	\node[bias neuron1](BN1)  at (0,-1) {};
	\node[annot] at (0,-1) {1};
	\foreach \name / \y in {1,...,4}
	\node[input neuron] (I-\name) at (0,-1-\y) {$x_\y$};

	\node[bias neuron2](BN2)  at (\layersep,-0.5) {};
	\node[annot] at (\layersep,-0.5) {1};
	
	\foreach \name / \y in {1,...,5}
	\path[yshift=0.5cm]
	node[hidden neuron] (H-\name) at (\layersep,-1-\y) {$a_\y^{(2)}$};
	
	\foreach \name / \y in {1,...,2}
	\node[output neuron] (O-\name) at (2*\layersep,-1.5-\y) {$a_\y^{(3)}$};
	
	\foreach \source in {1,...,4}
	\foreach \dest in {1,...,5}
	\path (I-\source) edge (H-\dest);
	
	\foreach  \dest in {1,...,5}
	\path (BN1) edge (H-\dest);
	
	\foreach \source in {1,...,5}
	\foreach \dest in {1,...,2}
	\path (H-\source) edge (O-\dest);
	
	\foreach  \dest in {1,...,2}
	\path (BN2) edge (O-\dest);
	
	\node[annot,above of=BN2, node distance=1cm] (hl) {Hidden layer};
	\node[annot,left of=hl] {Input layer};
	\node[annot,right of=hl] {Output layer};
\end{tikzpicture}
\caption{Illustration of a feed-forward artificial network with 4 inputs and one bias input, one hidden layer with one bias unit and 5 neurons and two outputs. This corresponds to $n^{(1)} = 4, n^{(2)} = 5, n^{(3)} = 2, L=3$.}
\label{fig:NN}
\end{figure}	

\section{Analysis of the Optimal Control problem}\label{sec:anaopt}

In this section we analyze the parameter estimation problem in terms of well-posedness. We begin with the state system and discuss then the well-posedness of the parameter identification problem. Finally, we show the existence of an adjoint state.

\subsection{Well-posedness of the ODE system}
Under some assumption on the activation functions $g^{(\ell)}, \ell = {2,\dots,L}$ we can establish a well-posedness result for system \eqref{ODE}. As the output of the neural network is the right-hand side of the ODE, we assume $n^{(L)} = d$ for the following considerations.

\begin{theorem}[Well-posedness of the state equation]\label{thm:wellODE}
Let the activation functions, $g^{(\ell)}$ of the \emph{NN} defined in Definition~\ref{def:NN} with $ n^{(L)} = d$ be globally Lipschitz for $\ell = {2,\dots,L}.$ Then there exists a unique solution $x \in \C^1([0,T],\R^d)$ to \eqref{ODE}. 
\end{theorem}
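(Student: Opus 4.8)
The plan is to cast the system \eqref{ODE} as a single autonomous ODE $\dot{X} = F(X)$ on $\R^{Nd}$ and invoke the Picard--Lindel\"of (Cauchy--Lipschitz) theorem. First I would stack the individual trajectories into one vector $X = (x_1,\dots,x_N) \in \R^{Nd}$ and define the right-hand side componentwise by
\[
F_i(X) = \sum_{j=1}^N W^{i,j}_\con(X)\,(x_j - x_i),
\]
so that \eqref{ODE} becomes $\dot X = F(X)$ with initial datum $X(0) = (z_0^1,\dots,z_0^N)$. Since the control $\con \in \U = \R^K$ is fixed throughout this statement, $F$ depends only on $X$, and the whole question reduces to showing that $F$ is globally Lipschitz on $\R^{Nd}$; the classical theory then yields a unique global solution $X \in \C^1([0,T],\R^{Nd})$, which is exactly the asserted regularity and uniqueness.

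The key step is therefore the Lipschitz estimate on $F$, and the main work is to transfer the hypothesis that each activation $g^{(\ell)}$ is globally Lipschitz into a global Lipschitz bound on the network output $W_\con$ viewed as a function of the input $X$. I would proceed layer by layer through Definition~\ref{def:NN}: the input layer is affine in $X$, each hidden layer applies a fixed affine map $a^{(\ell-1)} \mapsto \sum_j \con^{(\ell-1)}_{j,k} a^{(\ell-1)}_j$ followed by the Lipschitz function $g^{(\ell)}$, and the output layer is again affine composed with $g^{(L)}$. Composing Lipschitz maps gives a Lipschitz map, so by induction on $\ell$ the map $X \mapsto W_\con(X)$ is globally Lipschitz, with a constant $L_W$ depending on the fixed weights $\con$ and the Lipschitz constants of the $g^{(\ell)}$ but not on $X$.

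It then remains to handle the bilinear coupling $W^{i,j}_\con(X)(x_j - x_i)$. Here I expect the genuine obstacle: a product of two globally Lipschitz functions need not be globally Lipschitz, because the factor $(x_j - x_i)$ grows linearly in $X$. Writing the difference
\[
W^{i,j}_\con(X)(x_j - x_i) - W^{i,j}_\con(Y)(y_j - y_i)
\]
and adding and subtracting $W^{i,j}_\con(X)(y_j - y_i)$ splits the estimate into a term controlled by $L_W\,\norm{X - Y}\,\norm{y_j - y_i}$ and a term controlled by $\norm{W^{i,j}_\con(X)}\,\norm{X - Y}$, and the first term is not uniformly bounded unless $W_\con$ is bounded. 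Since the activations used in the experiments (softplus and the identity) are not bounded, genuine global Lipschitz continuity of $F$ seems too strong as stated; I would instead observe that $F$ is locally Lipschitz and use an a priori bound to rule out blow-up. The cleanest route is to bound the output of a \emph{bounded-domain} network: because the hidden activation $\log(1+e^x)$ and the identity grow at most linearly, $\norm{W_\con(X)} \le C(1 + \norm{X})$, and substituting this into $\dot X = F(X)$ gives $\frac{d}{dt}\norm{X}^2 \le C'(1+\norm{X}^2)$ by Cauchy--Schwarz. Gr\"onwall's inequality then furnishes an a priori bound $\norm{X(t)} \le R_T$ on $[0,T]$, confining the trajectory to a compact set on which $F$ is Lipschitz; the local Picard--Lindel\"of solution therefore extends to all of $[0,T]$ and is unique. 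Finally, $X \in \C^1$ follows immediately since $\dot X = F(X)$ with $F$ continuous and $X$ continuous, so the right-hand side is continuous in $t$.
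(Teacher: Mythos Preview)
You have misread the right-hand side of \eqref{ODE}. The expression $W^{i,j}_\con(x_j - x_i)$ is not a product of a state-dependent coefficient $W^{i,j}_\con(X)$ with the vector $x_j - x_i$; it is the neural network $W^{i,j}_\con\colon \R^{n^{(1)}}\to\R^d$ \emph{evaluated at the argument} $x_j - x_i$. This is consistent with Definition~\ref{def:NN} (the input layer is fed $x_{k-1}$, and the paper's applications use e.g.\ $W^{i,i+1}_u(z)=v_0\log z$ applied to a headway). Consequently the ``bilinear coupling'' you worry about simply does not occur: each summand is a composition of the globally Lipschitz map $W^{i,j}_\con$ with the linear map $X\mapsto x_j - x_i$, hence globally Lipschitz in $X$, and so is the finite sum $F_i$. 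Picard--Lindel\"of then applies directly without any a~priori bound or Gr\"onwall argument.

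Your layer-by-layer induction showing that $X\mapsto W_\con(X)$ is globally Lipschitz is exactly the argument the paper gives (it writes out the recursion $a^{(L)},a^{(L-1)},\dots,a^{(1)}$ and composes the Lipschitz activations with the affine weight maps). Once you correct the reading of the ODE, your proof collapses to the paper's: establish the Lipschitz estimate $|a_k^{(L)}(x)-a_k^{(L)}(y)|\le C_g|x-y|$ via the recursive structure, conclude global Lipschitz continuity of the right-hand side, and invoke Picard--Lindel\"of. The extra machinery you introduced (linear growth, $\tfrac{d}{dt}\|X\|^2$ estimate, extension past a compact set) is unnecessary here.
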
	
\begin{proof}
	The proof exploits the recursive structure of the neural network defined in Definition~\ref{def:NN}. In fact, let $x  \in \R^{n^{(1)}}$ and $a^{(L)} \in \R^d$ then 
	\begin{align*}
	a_k^{(L)}(x) &= g^{(L)}\left( \sum_{j=1}^{n^{(L -1)}+1} \con_{j,k}^{(L-1)} a_j^{(L -1)}(x) \right), \\
	a_j^{(L -1)}(x) &= g^{(L-1)}\left( \sum_{m=1}^{n^{(L -2)}+1} \con_{m,j}^{(L-2)} a_m^{(L-2)}(x) \right), \\
	&\dots,\\
	a_m^{(1)} &= x_{m-1}.
	\end{align*}	
Note that all relations are linear except for the activation functions $g^{(\ell)}, \ell = {2,\dots,L}.$
Using the globally Lipschitz assumption on $g$ and the aforementioned linearity, we obtain
\begin{equation*}
	| a_k^{(L)}(x) -  a_k^{(L)}(y) | \le C_g |x-y|,
\end{equation*}
where the constant $C_g$ depends on all the Lipschitz constants of $g^{(\ell)}, \ell = {2,\dots,L}.$ This allows us to conclude the global Lipschitz property of the right-hand side of the ODE. An application of the Picard-Lindel\"of theorem yields the well-posedness of the ODE as desired.
\end{proof}	

\begin{remark}
The SmoothReLU activation function given by $g(x) = \ln(1 + e^x)$ is one example satisfying the assumptions of Theorem~\ref{thm:wellODE}. Another suitable activation function is the identity $g(x)=x.$
\end{remark}	

The analysis of the optimal control problem is established in a Hilbert space framework. That is why we use the embedding $H^1([0,T], \R^{dN}) \hookrightarrow \C([0,T], \R^{dN})$ and define the control to state map
$$\mathcal S \colon \U \rightarrow H^1([0,T], \R^{dN}),\qquad \mathcal S \colon \con \mapsto x(\con),$$
which is well-defined thanks to Theorem~\ref{thm:wellODE}. 
Moreover, we define the reduced cost functional $$ \hat \J(\con) := \J(\mathcal S(\con); z).$$

The next theorem provides us the continuous dependence on the data for the ODE solution. It will help us in the proof of the existence of a minimizer below (Theorem~\ref{thm:exMin}).
\begin{theorem}[Continuous dependence on the data]\label{thm:contdep}
Let the assumptions of Theorem~\ref{thm:wellODE} hold and, additionally, let $\U$ be bounded and $g^{(\ell)}$ be  bounded for $\ell = {2,\dots,L-1}$. Then the solution to \eqref{ODE} depends continuously on the data, i.e.,
\[
\| x - \bar x\|_{H^1([0,T], \R^d)} \le M \left( \| x_0 - \bar x_0 \| + \| \con- \bar\con\|_{\U} \right).
\]
\end{theorem}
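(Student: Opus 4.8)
The plan is to compare two solutions $x$ and $\bar x$ of \eqref{ODE} with data $(x_0,\con)$ and $(\bar x_0,\bar\con)$, respectively, and to estimate the difference $w:=x-\bar x$ first in the sup-norm by a Grönwall argument, and then to lift the bound to $H^1$ by controlling $\dot w$. Writing the right-hand side of \eqref{ODE} as $F(x,\con)$, I would split
$$F(x,\con)-F(\bar x,\bar\con)=\big[F(x,\con)-F(\bar x,\con)\big]+\big[F(\bar x,\con)-F(\bar x,\bar\con)\big].$$
The first bracket is the \emph{state part}: by Theorem~\ref{thm:wellODE} the right-hand side is globally Lipschitz in the state with some constant $L$, so that $\norm{F(x,\con)-F(\bar x,\con)}\le L\,\norm{w}$ pointwise in $t$.

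The heart of the proof -- and the step I expect to be the main obstacle -- is the \emph{parameter part}, for which I must establish a uniform Lipschitz estimate in $\con$ along the reference trajectory,
$$\norm{F(\bar x,\con)-F(\bar x,\bar\con)}\le C\,\norm{\con-\bar\con}_\U .$$
This is exactly where the additional hypotheses are used. Since $\bar x\in\C([0,T],\R^{dN})$ it is bounded on the compact interval $[0,T]$, so the network inputs $\bar x_j-\bar x_i$ range over a fixed compact set; the boundedness of the hidden activations $g^{(\ell)}$, $\ell=2,\dots,L-1$, then forces all intermediate layers $a^{(\ell)}$ to stay bounded, while the boundedness of $\U$ keeps the weight entries $\con^{(\ell)}$ bounded. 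Differentiating the recursive relations of Definition~\ref{def:NN} with respect to a weight entry and propagating the chain rule through the layers, each resulting term is a product of factors $(g^{(\ell)})'$ (bounded because $g^{(\ell)}$ is globally Lipschitz), weights (bounded on $\U$), and activations (bounded as just argued, with the innermost factor being either a bounded hidden activation or the bounded input). Hence the Jacobian $\partial_\con a^{(L)}(\bar x;\con)$ is uniformly bounded, and the mean value inequality gives the Lipschitz bound for the network output, uniformly over $\con\in\U$ and over $t\in[0,T]$; summing the finitely many contributions $W^{i,j}_\con$ over $i,j$ transfers this to $F$ with a constant $C$ depending only on $T$, $N$, the Lipschitz and boundedness constants of the $g^{(\ell)}$, and the diameter of $\U$.

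Combining the two parts yields the pointwise estimate $\norm{\dot w}\le L\,\norm{w}+C\,\norm{\con-\bar\con}_\U$, and hence $\tfrac{d}{dt}\norm{w}^2\le 2L\,\norm{w}^2+2C\,\norm{w}\,\norm{\con-\bar\con}_\U$. Grönwall's lemma then produces a sup-norm bound of the form $\norm{w}_{\C([0,T],\R^{dN})}\le c_1\,\norm{x_0-\bar x_0}+c_2\,\norm{\con-\bar\con}_\U$ with constants $c_1=e^{LT}$ and $c_2=\tfrac{C}{L}(e^{LT}-1)$, which immediately controls $\norm{w}_{L^2(0,T)}$. To reach the $H^1$ norm I would reinsert this estimate into $\norm{\dot w}\le L\,\norm{w}+C\,\norm{\con-\bar\con}_\U$ to bound $\norm{\dot w}_{L^2(0,T)}$ by the same data, and finally add the $L^2$ bounds for $w$ and $\dot w$, absorbing all constants into a single $M=M(T,L,C,N)$ to obtain the claimed inequality.
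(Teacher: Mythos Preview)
Your proposal is correct and follows the same overall scheme as the paper: bound $W_\con(x_j-x_i)-W_{\bar\con}(\bar x_j-\bar x_i)$ by a combination of $\|x-\bar x\|$ and $\|\con-\bar\con\|$, apply Gronwall for the $C^0$ estimate, and then reinsert to control $\dot w$ and obtain the $H^1$ bound.

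The organizational difference is that you split at the top level into a \emph{state part} $F(x,\con)-F(\bar x,\con)$ (handled by quoting Theorem~\ref{thm:wellODE}) and a \emph{parameter part} $F(\bar x,\con)-F(\bar x,\bar\con)$ (handled by a Jacobian/mean-value argument along the fixed trajectory $\bar x$), whereas the paper treats both variations simultaneously through a single recursive layer-by-layer estimate: at each layer it writes $\con\,a^{(\ell-1)}(x)-\bar\con\,a^{(\ell-1)}(\bar x)$ as $\con\bigl(a^{(\ell-1)}(x)-a^{(\ell-1)}(\bar x)\bigr)+(\con-\bar\con)\,a^{(\ell-1)}(\bar x)$, bounds the second summand directly by the assumed boundedness of the hidden activations $g^{(\ell)}$, and recurses on the first. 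The paper's route exploits the hypothesis ``$g^{(\ell)}$ bounded for $\ell=2,\dots,L-1$'' more directly---the activation values $a^{(\ell)}(\bar x)$ are bounded independently of $\bar x$ at every hidden layer---whereas your argument leans on the compactness of $[0,T]$ to bound the reference inputs $\bar x_j-\bar x_i$; both are valid, and both ultimately need the input-layer contribution controlled. One minor point: your Jacobian argument tacitly uses differentiability of the $g^{(\ell)}$, while the hypotheses only give global Lipschitz continuity; this is harmless, since the mean-value inequality you invoke is nothing more than the Lipschitz estimate itself, but you may want to phrase it that way rather than via $\partial_\con a^{(L)}$.
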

\begin{proof}
Let us consider the $i$-th component of the difference of two solutions corresponding to different data $\con$ and $\bar \con$ given by
\[
x_i(t) - \bar x_i(t) = (x_0 - \bar x_0)_i + \int_0^t \sum_{j=1}^N W^{i,j}_\con(x_j(s) -x_i(s)) - \sum_{j=1}^N W^{i,j}_{\bar \con}( \bar x_j(s)-\bar x_i(s))ds.
\]
We therefore need to estimate
\begin{align*}
	&|\left( W^{i,j}_{ \con}( x_j- x_i) - W^{i,j}_{\bar \con}( \bar x_j-\bar x_i) \right) _m| \\ &=  | g^{(L)}\left(\sum_{k=1}^{n^{(L-1)}+1} \con_{k,m} a_k^{(L-1)}(x_j - x_i)\right) - g^{(L)}\left(\sum_{k=1}^{n^{(L-1)}+1}  \bar \con_{k,m} a_k^{(L-1)}(\bar x_j - \bar x_i)\right) | \\
	&\le L_{g^{(L)}} \sum_{k=1}^{n^{(L-1)}+1}  |\con_{k,m} | |a_k^{(L-1)}(x_j - x_i) -a_k^{(L-1)}(\bar x_j - \bar x_i) | + |\con_{k,m} - \bar \con_{k,m} | |a_k^{(L-1)}(\bar x_j - \bar x_i) | \\
	&\le L_{g^{(L)}} \sum_{k=1}^{n^{(L-1)}+1} M_\con |a_k^{(L-1)}(x_j - x_i) -a_k^{(L-1)}(\bar x_j - \bar x_i)| + M_{g^{(L-1)}}  |\con_{k,m} - \bar \con_{k,m} |,
\end{align*}  
where $M_\con$ denotes the upper bound of $\U,$ $M_{g^{(\ell-1)}}$ denotes the bound for the activation functions $g^{(\ell)}$ with $\ell = 2, \dots,L-1$ and $L_{g^{(\ell)}}$ denotes the global Lipschitz constants of the activation functions.

The last step of the previous estimate can be recursively applied to
\begin{align*}
	&|a_k^{(\ell-1)}(x_j - x_i) -a_k^{(\ell-1)}(\bar x_j - \bar x_i)| \\ &\qquad \qquad \le \sum_{p=1}^{n^{(\ell-2)}+1} \con_{p,k}^{(\ell-2)} a_p^{(\ell-2)}(x_j -x_i) - \bar \con_{p,k}^{(\ell-2)} a_p^{(\ell-2)}(\bar x_j - \bar x_i),\quad \ell = 3,\dots,L-1. 
\end{align*}
For the first layer, we have
\[
|a_k^{(1)}(x_j - x_i) -a_k^{(1)}(\bar x_j - \bar x_i)| \le |(x_j - \bar x_j)_k| +  |(x_i - \bar x_i)_k|.
\]
The two estimates together yield
\begin{equation}\label{eq:stabilityW}
	|\left( W^{i,j}_{ \con}( x_j-x_i) - W^{i,j}_{\bar \con}( \bar x_j-\bar x_i) \right) _m|  \le C_1 \|\con - \bar \con\| + C_2 \| x - \bar x \|.
\end{equation}
This implies
\begin{align*}
	\| x(t) - \bar x(t)\| = \sum_{i=1} |x_i(t) - \bar x_i(t)| \le \|x_0 - \bar x_0\| + C_3  \|\con - \bar \con\| + C_4 \int_0^t \|x(s) -\bar x(s)\| ds.
\end{align*}
An application of Gronwall's theorem gives us 
\begin{equation}\label{eq:stability}
\| x(t) - \bar x(t) \| \le C \left( \| x_0 -\bar x_0 \| + \| \con -\bar \con \| \right) e^{C_4t}.
\end{equation}
Using \eqref{eq:stabilityW} and \eqref{eq:stability}, we obtain for the time derivative
\begin{align*}
\| \frac{d}{dt} x_i(t) - \frac{d}{dt} \bar x_i(t) \| &\le  \sum_{j} \| W^{i,j}_\con(x_j -x_i) - W^{i,j}_{\bar \con}(\bar x_j - \bar x_i) \| \\
&\le C_5\sum_{j} \left( \| x_0 -\bar x_0 \| + \| \con -\bar \con \| \right) e^{C_4t} .
\end{align*}
Summing over $i=1,\dots,N$ we get the bound
\begin{equation}\label{eq:stabilityDerivative}
	\| \frac{d}{dt} x(t) - \frac{d}{dt} \bar x(t) \| \le C \left( \| x_0 -\bar x_0 \| + \| \con -\bar \con \| \right) e^{C_4t}.
\end{equation}
Combining the estimates in \eqref{eq:stability} and \eqref{eq:stabilityDerivative} leads to the desired result.
\end{proof}

In the following we are concerned with the existence of a minimizer to the control problem and the existence of an adjoint state. Latter will help us to state the first order optimality conditions and to compute the gradient for the descent algorithm. For notational convenience, we define the operator $$e(x,\con) \colon H^1([0,T],\R^{dN} ) \times \U \rightarrow Z, \qquad e(x,\con)  = \Big( e_i(x,\con) \Big)_{i=1,\dots,N}$$ with $$e_i(x,\con) = \frac{d}{dt} x_i - \sum_{j=1}^N W_\con^{i,j}(x_j -x_i).$$

\begin{theorem}[Existence of a minimizer]\label{thm:exMin}
Let $W_\con^{i,j}$ be weakly continuous for all $i,j=1,\dots,N$ and $g^{(L)}(0)=0$. Then there exists a minimizer $\con^*\in\Uad$ for Problem~\ref{optProb}. 
\end{theorem}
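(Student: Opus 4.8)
The plan is to apply the direct method of the calculus of variations, exploiting the compactness of the admissible set $\Uad = [-1,1]^K$ in the finite-dimensional control space $\U = \R^K$. Since $\hat\J(\con) = \tfrac12 \norm{z - \mathcal S(\con)}_{L^2(0,T)}^2 \ge 0$ is bounded below, the infimum $m := \inf_{\con \in \Uad} \hat\J(\con)$ is finite, and I would start from a minimizing sequence $(\con^n) \subset \Uad$ with $\hat\J(\con^n) \to m$. Because $\Uad$ is closed and bounded in $\R^K$, the Bolzano--Weierstrass theorem yields a subsequence (not relabeled) with $\con^n \to \con^* \in \Uad$.

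The next step is to control the associated states $x^n := \mathcal S(\con^n)$. Tracing the recursive structure of the network as in the proof of Theorem~\ref{thm:wellODE}, the Lipschitz continuity of the activation functions together with $g^{(L)}(0)=0$ and the uniform bound $\norm{\con^n}_\infty \le 1$ give an affine growth estimate $|W^{i,j}_{\con^n}(x_j - x_i)| \le C_1 |x_j - x_i| + C_0$ with constants independent of $n$. Plugging this into \eqref{ODE} and applying Gronwall's inequality produces a uniform bound on $x^n$ in $\C([0,T],\R^{dN})$; reinserting this bound into the right-hand side of \eqref{ODE} bounds $\tfrac{d}{dt}x^n$ uniformly as well, so that $(x^n)$ is bounded in $H^1([0,T],\R^{dN})$.

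I would then extract the limit. Reflexivity of $H^1$ gives a further subsequence with $x^n \rightharpoonup x^*$ weakly in $H^1$, and the compact embedding $H^1([0,T],\R^{dN}) \hookrightarrow \C([0,T],\R^{dN})$ upgrades this to $x^n \to x^*$ uniformly on $[0,T]$ (equivalently, one may invoke Arzel\`a--Ascoli directly, since the uniform $H^1$ bound makes $(x^n)$ equibounded and equi-Lipschitz). It remains to pass to the limit in $e(x^n,\con^n)=0$: the time derivatives converge weakly in $L^2$ to $\tfrac{d}{dt}x^*$, while the uniform convergence $x^n \to x^*$, the convergence $\con^n \to \con^*$, and the weak continuity of $W^{i,j}_\con$ together give $W^{i,j}_{\con^n}(x_j^n - x_i^n) \to W^{i,j}_{\con^*}(x_j^* - x_i^*)$. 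Hence $e(x^*,\con^*) = 0$, i.e. $x^* = \mathcal S(\con^*)$.

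Finally, since $x^n \to x^*$ in $\C([0,T],\R^{dN}) \hookrightarrow L^2(0,T)$, the cost functional is continuous along the sequence, so $\hat\J(\con^*) = \J(x^*;z) = \lim_n \J(x^n;z) = m$, which shows that $\con^*$ minimizes Problem~\ref{optProb}. The main obstacle is the passage to the limit in the nonlinear interaction term $W^{i,j}_{\con^n}(x_j^n - x_i^n)$: this is exactly where the weak continuity hypothesis on $W^{i,j}_\con$ enters, and it is why the qualitative upgrade from weak $H^1$ convergence to strong uniform convergence (via the compact embedding) is indispensable --- weak convergence of the states alone would not suffice to identify the limit of this nonlinearity.
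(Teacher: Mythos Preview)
Your proof is correct and follows the same direct-method strategy as the paper: bound the cost from below, extract a convergent minimizing sequence in the compact set $\Uad$, show the corresponding states are bounded in $H^1$, and pass to the limit in the state equation. The execution differs in one respect. The paper stays entirely at the level of weak $H^1$ convergence of the states and uses the weak continuity of $W^{i,j}_\con$ (together with weak convergence of $\tfrac{d}{dt}x_n$) to conclude $e(x_{n_k},\con_{n_k}) \rightharpoonup e(x_\infty,\con_\infty)$, whence $e(x_\infty,\con_\infty)=0$; weak lower semicontinuity of $\J$ then finishes the argument. You instead upgrade to strong uniform convergence via the compact embedding $H^1([0,T],\R^{dN}) \hookrightarrow \C([0,T],\R^{dN})$, which buys you genuine continuity of both the nonlinear term and the cost (so you obtain $\hat\J(\con^*) = m$ rather than merely $\le m$). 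Your route is slightly more elementary and in fact shows that, once the compact embedding is used, the weak-continuity hypothesis is stronger than necessary --- ordinary joint continuity of $W$ in $(x,\con)$ would already do. For the same reason, your closing remark that ``weak convergence of the states alone would not suffice'' is not quite right: handling the nonlinearity under mere weak convergence of the states is exactly what the weak-continuity assumption is designed for, and it is how the paper proceeds without ever invoking the compact embedding.
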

\begin{proof}
As the cost is bounded from below and the state problem is well-posed by Theorem~\ref{thm:wellODE}, there exists
$$ m = \inf\limits_{\con \in \Uad} \J(\con).$$
 Moreover, we find a minimizing sequence $(\con_n)_n \subset \Uad.$ 
By definition of $\Uad$, $(\con_n)_n$ is bounded. Note that $\Uad$ is finite dimensional and thus reflexive. Hence we can extract a converging subsequence (not relabeled)  $(\con_n)_n$ with $\con_n \rightarrow \con_\infty$ as $n \rightarrow \infty.$  

The assumption $g^{(L)}(0)=0$ together with Theorem~\ref{thm:contdep} assures the boundedness of the solution operator $\mathcal S$. Hence, the sequence of solutions $(S\con_n)_n = (x_n)_n$ is bounded in the reflexive space $H^1([0,T],\R^{dN}).$ This allows us to extract a weakly convergent subsequence  $x_{n_k} \rightharpoonup x_\infty$ in $H^1([0,T],\R^{dN}).$ Together with the weak continuity of $W_\con^{i,j}$ we obtain
$$ e(x_{n_k},\con_{n_k}) \rightharpoonup e(x_\infty, \con_\infty),$$
which implies
$$ \| e(x_\infty, \con_\infty) \| \le \liminf\limits_{k \rightarrow \infty} \| e(x_{n_k},\con_{n_k}) \| = 0. $$
Therefore it holds $\mathcal S(\con_\infty) = x_\infty.$ As the cost functional is weakly lower semicontinuous w.r.t.~$x,$ we obtain
\[
\J(\con_\infty) = \J(x_\infty;z) \le \liminf\limits_{n \rightarrow \infty } \J(x_n;z) = \liminf\limits_{n \rightarrow \infty } \J(\con_n) =  m = \inf\limits_{\con \in U_\text{ad}} \J(\con).
\]
We conclude that $\con_\infty$ is a minimizer of the optimal control problem.
\end{proof}

\begin{remark}
	Note that the minimizer of the parameter identification problem may not be unique due to the nonlinearity in the state problem.
\end{remark}

\begin{remark}
	Again, the SmoothReLU activation functions satisfy the requirements of the existence result in Theorem~\ref{thm:exMin}. Indeed, $g(x) = \ln(1 + e^x)$ and $g(x)=x$ have uniformly bounded derivatives. Hence, we apply the mean-value theorem to obtain
	$$ \int_0^T (g((x_{n_j} -x_{n_i})_k) - g((x_j - x_i)_k)) \phi(t) dt = \int_0^T g'(\xi) (x_{n_j} - x_j + x_{n_i} -x_i)_k \phi(t) dt \longrightarrow 0$$
	as $x_n$ converges weakly to $x$ for some $\xi \in [(x_{n_j} - x_{n_i})_k,(x_j - x_i)_k]$.  
	Moreover, choosing $g^{(L)}$ to be the identity leads to $g^{(L)}(0)=0$. 
\end{remark}

\begin{theorem}[Existence of adjoint state]
Let $g^{(\ell)} \in \C^1$ with $(g^{(\ell)})' \in \text{Lip}_\text{loc}(\R)$ globally bounded and let $(\bar x, \bar u)$ an optimal solution of Problem~\ref{optProb}. Then there exists an adjoint state, $\bar p,$ such that the following optimality condition holds:
\begin{align*}
		&\frac{d}{dt} \bar x_i = \sum_{j=1}^N W^{i,j}_{\bar\con}(\bar x_j -\bar x_i), \quad \bar x_i(0) = z_0^i,\quad i=1,\dots,N,\\
		&\int_0^T \left( \frac{d}{dt} h_i + \sum_{i=1}^N d_{x_i} W_{\bar\con}^{i,j}(\bar x_j - \bar x_i) h_i - d_{x_j} W_{\bar\con}^{i,j}(\bar x_j - \bar x_i) h_j \right) \cdot \bar p_i\, dt + h_i(0) \cdot \eta \\ &\qquad\qquad\qquad\qquad = \int_0^T (\bar x_i(\bar \con)-z_i) \cdot h_i\,dt, \quad\qquad\forall\, h_i \in H^1((0,T), \R^d),\, \eta \in \R^d, \\
		& \int_0^T \sum_{i=1}^N \sum_{j=1}^N \nabla_\con W_{\bar \con}^{i,j}(\bar x_j -\bar x_i) \cdot \bar p_i \,dt \cdot (\bar \con - \con ) \ge 0 \quad \text{for all}\quad \con \in U_\text{ad}.
\end{align*}
\end{theorem}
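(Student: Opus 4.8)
The plan is to obtain the optimality system as the first-order stationarity conditions of the Lagrangian associated with Problem~\ref{optProb}, with $\bar p$ playing the role of the multiplier for the state constraint $e(x,\con)=0$. Concretely, I would introduce
\[
\mathcal L(x,\con,p,\eta) = \J(x;z) + \langle e(x,\con), p\rangle_Z + \sum_{i=1}^N (x_i(0)-z_0^i)\cdot \eta_i,
\]
so that the three asserted relations are exactly the stationarity in $p$ (which reproduces \eqref{ODE}), the stationarity in $x$, and the variational inequality in $\con$ over the convex set $\Uad$. To make this rigorous, the first task is to show that $e\colon H^1([0,T],\R^{dN})\times\U\to Z$ is continuously Fréchet differentiable. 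This follows from the recursive structure already used in Theorem~\ref{thm:wellODE}: the hypothesis $g^{(\ell)}\in\C^1$ with globally bounded derivative makes each layer a $\C^1$ Nemytskii operator on $\C([0,T])$, and, via the embedding $H^1([0,T],\R^{dN}) \hookrightarrow \C([0,T],\R^{dN})$ and the chain rule, the composite map $(\con,x)\mapsto W^{i,j}_\con(x_j-x_i)$ is $\C^1$ with the partial derivatives $d_{x_j}W$, $d_{x_i}W$, $\nabla_\con W$ that appear in the statement; the local Lipschitz continuity of $(g^{(\ell)})'$ guarantees continuity of these derivatives.

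Next I would verify the constraint qualification that guarantees that the multiplier $\bar p$ exists. The partial derivative $e_x(\bar x,\bar\con)$ sends $h$ to $\frac{d}{dt}h_i-\sum_j\bigl(d_{x_j}W^{i,j}_{\bar\con}(\bar x_j-\bar x_i)h_j - d_{x_i}W^{i,j}_{\bar\con}(\bar x_j-\bar x_i)h_i\bigr)$, that is, the left-hand side of a linear ODE whose coefficients are bounded (by the global bound on $(g^{(\ell)})'$) and continuous in $t$. Hence, for any right-hand side in $Z$ and any prescribed initial value $h(0)$, this linear system has a unique $H^1$ solution; equivalently, the map $h\mapsto(e_x(\bar x,\bar\con)h,\, h(0))$ is surjective onto $Z\times\R^{dN}$. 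This surjectivity is the Zowe--Kurcyusz/Lyusternik regularity condition, so the generalized Lagrange multiplier theorem yields $\bar p$ and $\eta\in\R^{dN}$ with $\partial_x\mathcal L=0$. Spelling out $\partial_x\mathcal L\cdot h=0$ for all $h\in H^1([0,T],\R^d)$ and $\eta\in\R^d$ gives precisely the weak adjoint equation in the statement; integrating the $\frac{d}{dt}h_i\cdot\bar p_i$ contribution by parts identifies $\bar p$ as the $H^1$ solution of the backward linear adjoint ODE with transversality condition $\bar p_i(T)=0$, the boundary term at $t=0$ being absorbed into $\eta$.

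Finally, the inequality in $\con$ is the stationarity of $\mathcal L$ in the control direction: since $\Uad=[-1,1]^K$ is convex and $\bar\con$ is a minimizer, $\partial_\con\mathcal L\cdot(\bar\con-\con)\ge0$ for all $\con\in\Uad$, which after inserting $\bar p$ becomes $\int_0^T\sum_{i,j}\nabla_\con W^{i,j}_{\bar\con}(\bar x_j-\bar x_i)\cdot\bar p_i\,dt\cdot(\bar\con-\con)\ge0$. Since $\U$ is finite dimensional, one may equivalently argue directly: $\hat\J=\J\circ\mathcal S$ is $\C^1$ by the differentiability above, optimality of $\bar\con$ over the convex $\Uad$ gives $\hat\J'(\bar\con)(\con-\bar\con)\ge0$, and $\bar p$ is introduced solely to represent $\hat\J'(\bar\con)$ without the sensitivity $\mathcal S'(\bar\con)$. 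I expect the main obstacle to be the rigorous Fréchet differentiability of the NN-driven operator $e$ between these function spaces together with the surjectivity/constraint qualification; once these are secured, the three conditions are a structured integration-by-parts computation, and the well-posedness of the linear backward ODE (ensured by its bounded, continuous coefficients) delivers the desired adjoint state $\bar p$.
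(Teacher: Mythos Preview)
Your proposal is correct and follows essentially the same route as the paper: verify that $\J$ and $e$ are continuously Fr\'echet differentiable (using the $\C^1$ and bounded-derivative assumptions on the activation functions), establish the regularity of the linearized state operator $e_x(\bar x,\bar\con)$ via solvability of the corresponding linear ODE and a Gronwall estimate, and then invoke an abstract Lagrange multiplier/KKT result. The paper packages this as checking the four hypotheses of Corollary~1.3 in \cite{HUUP} (with ``$e_y$ has a bounded inverse'' in place of your surjectivity formulation), while you phrase it via the Zowe--Kurcyusz constraint qualification and handle the initial condition with an explicit multiplier $\eta$; these are the same argument in slightly different dress.
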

\begin{proof}
We aim to apply Corollary 1.3 in \cite{HUUP} and check its four requirements. 

We begin the set of admissible controls. $\Uad = [-1,1]^K \subset U = \R^K$ is nonempty, convex and closed. This verifies the first requirement.

Next, we have to show that $\J \colon Y \times \U \rightarrow \R$ and $e \colon Y \times \U \rightarrow Z$ are continuous Fr\'echet differentiable. We set $\U = \R^K, Y = H^1((0,T), \R^d)$ and $Z = L^2((0,T), \R^d)$. Note that these are all Banach spaces. The cost functional $J$ is Fr\'echet differentiable by standard arguments, see for example \cite{HUUP}, it holds
$$ d_y J(y(u),u)[h] = \int_0^T (x(u) - z_i) \cdot h\, dt, \qquad d_u J(y(u),u)[h] = 0.$$
 For the state operator $e$ we find
\begin{align*} d_{y} e_i(y,u)[h] &= \frac{d}{dt} h_i + \sum_{j=1}^N d_{x_i} W_\con^{i,j}(x_j - x_i) h_i - d_{x_j}W_\con^{i,j}(x_j - x_i) h_j,\quad i=1,\dots,N, \\
d_{u} e_i(y,u)[k] &= \int_0^T \sum_{i=1}^N \sum_{j=1}^N d_\con W_{ \con}^{i,j}( x_j - x_i)[k] \cdot \bar p_i \,dt.	
\end{align*}
Using the assumptions on the activation function $g^{(\ell)}$, this yields $e_y(y,u) \in \mathcal L(Y,Z).$ 

Third, Theorem~\ref{thm:wellODE} assures that $e(y,u)=0$ admits a unique solution $$y=y(u) \in \C^1((0,T),\R^d) \subset H^1((0,T), \R^d).$$

We are left to show that $e_y(y(u),u) \in \mathcal L(Y,Z)$ has a bounded inverse for all $u \in V$ in a neighborhood of $\Uad.$ In order to see that we consider
$$ e_y(y(u),u) [h] = r $$
for arbitrary $r \in Z=L^2((0,T), \R^d).$ By the Caratheodory theory for ODEs, there exists a unique solution to this equation for every $r \in  Z.$ Using the explicit expression of $e_y(y(u),u)$ we find
$$ \| h(t) \| = \| h(0) \| + \int_0^T \| r(s) \| ds + C_{DW}\int_0^T \| h(s) \| \quad \text{for all }t\in(0,T) $$
for some constant $C_{DW}>0$ depending on the global bounds on the derivatives of the activation functions. An application of Gronwall's Lemma yields the boundedness of the inverse of $e_y(y(u),u).$

Hence, the requirements of Corollary 1.3 in \cite{HUUP} are satisfied and we find an adjoint state $\bar p$ such that the optimality condition holds as desired.
\end{proof}

These results assure the well-posedness of the optimization problem and its first order optimality system. We are well-equipped to state the stochastic gradient descent algorithm that we propose for the treatment of general neural network driven optimization problems. The numerical results below are computed with this algorithm as well.

\section{Stochastic Gradient descent algorithm}\label{stochgrad}

Having the results of the previous section at hand, we can now propose the stochastic gradient descent algorithm. 
In this section we  assume that $p \in H^1((0,T),\R^d) \subset Z$. This allows us to formulate the adjoint system in strong form given by
\[
-\frac{d}{dt} \bar p_i = \sum_{j=1}^N \nabla_{x_i} W_{\bar \con}^{i,j}(\bar x_j - \bar x_i) (\bar p_i - \bar p_j) - (\bar x_i -z_i),\qquad i=1,\dots,N,
\]
supplemented with the terminal condition $p(T) = 0,$ where $\bar x$ is a solution to \eqref{ODE} with initial condition $x_0 = z_0.$ The gradient is based on this strong form of the adjoint.

\subsection{Gradient of the reduced cost functional}

We compute the gradient of the reduced cost functional as follows
\[
\langle \hat \J'(u), s \rangle = \langle z - \mathcal S(u) , \mathcal S'(u)[s] \rangle = \langle \mathcal S'(u)^* (z - \mathcal S(u)) , s \rangle .
\]
Let $e$ be the operator defined above, we obtain
\[
S'(u)^* (z - \mathcal S(u)) = - e_u(\mathcal S(u),u)^* e_y(\mathcal S(u),u)^{-*}(z - \mathcal S(u)) = e_u(\mathcal S(u),u)^* p,
\]
where we used the adjoint equation $e_y(\mathcal S(u),u)^* p = - (z-\mathcal S(u)).$ Altogether, the gradient of the reduced cost functional can be expressed as
\[
\hat \J'(u) = e_u(\mathcal S(u),u)^* p(u) = \int_0^T \sum_{i=1}^N \sum_{j=1}^N \nabla_\con W_{\bar \con}^{i,j}(\bar x_j -\bar x_i) \cdot \bar p_i \,dt.
\]

Based on these considerations we may establish a gradient descent algorithm.  Moreover, for $\Uad$ bounded, we may employ a projected gradient descent method. Note that in our application we face big data sets and therefore the evaluation of the full cost function is very costly. This is why we use a mini-batch algorithm. Its details are discussed in the following section.

\begin{remark}
For general artificial neural networks the computation of $\nabla_x W_\con^{i,j}$ and $\nabla_\con W_\con^{i,j}$ can be very complicated. Here, the recursive structure of the artificial neural network allows for a straight forward computation of the derivatives using multiple application of the chain rule. 
\end{remark}

\subsection{Stochastic descent}\label{sec:stoDescent}
In many applications we expect the cost functions to have several local minima, where usual gradient descent algorithms may get stuck. To prevent this issue when training the neural networks, we use a mini-batch gradient descent scheme. Indeed, we compute ADADELTA updates as proposed in \cite{GoettKnapp}. This leads to the following algorithm:

Let $\alpha_k$ denote the $k$-th iterate of the gradient descent. We define
\begin{gather*}
	\Delta\alpha_k = \alpha_{k+1} -\alpha_k,\qquad E[g^2]_0 = 0, \qquad E[\Delta \alpha^2]_0 = 0, \\
	E[g^2]_k = \rho E[g^2]_{k-1} + (1-\rho) (\nabla \J_{k-1}^{\tilde m}(\alpha_{k-1}))^2, \\
	\Delta \alpha_k = -\frac{\sqrt{E[\Delta \alpha^2]_{k-1} +\epsilon}}{\sqrt{E[g^2]_k} + \epsilon}\nabla \J_k^{\tilde m}(\alpha_k), \\
E[\Delta \alpha^2]_k = \rho E[\Delta \alpha^2]_{l-1} + (1-\rho) \Delta\alpha_k^2,
\end{gather*}
where $\rho \in (0,1)$ is the rate for the adaption of the squared gradient information and $\epsilon >0$ avoids singular values by division. Both are fixed parameters. 

Note that these updates are still deterministic. In order to incorporate noise that help us to escape from local minima, we add a multivariate normal distributed random vector  $N_k$ with $N_k \sim \mathcal N(0,\Sigma_k)$ to the gradient in each iteration. Here, $\Sigma_k$ denotes the variance matrix. We choose $$ (\Sigma_k)_{ii} = \frac{\eta_1}{(1+k)^{\eta_2}} $$ for some constants $\eta_1, \eta_2 >0$ and  $ (\Sigma_k)_{ij} = 0 $ whenever $i \ne j.$ Note that the noise diminishes as the number of iteration, $k$, increases. For the numerical simulations we set $\eta_1 = 1$ and $\eta_2 = 0.55,$ the adaption rate $\rho = 0.95$ and $\epsilon = 10^{-6}.$

\section{Parameter estimation based on traffic data}\label{traffic}
The stochastic gradient descent method proposed above can be used to treat general neural network driven optimization or parameter identification problem. In the following we apply it to two applications for which we have real data available. We begin with a one-dimensional traffic dynamic modelled with the help of a first order ODE system, then we consider a two-dimensional pedestrian dynamic modelled with a second order dynamic.
Moreover, we use the stochastic gradient descent algorithm to estimate parameters of well-known interaction forces for the two scenarios and compare the output and the cost.

\subsection{Traffic models}
For the traffic dynamic we assume to have a \textit{follower-leader} dynamic. Therefore, we choose two microscopic versions of the well-known LWR-model \cite{LWR_HR} with logarithmic and linear velocity function, respectively, as reference models.

\subsubsection{Traffic dynamic with LWR-Model}
Let $x_i(t)$ denote the position of the $i$-th car at time $t \in [0,T].$ Then the evolution of the cars is given by
\begin{subequations}\label{eq:LWR}
\begin{align}
\frac{d}{dt} x_i(t) &= W_u^{i,i+1}\left(\frac{x_{i+1}(t) -  x_i(t)}{L} \right), \quad i=1,\dots, N-1, \\
\frac{d}{dt} x_N(t) &= v_0. 
\end{align} 
\end{subequations}
The parameters are $v_0$ the velocity of the leading car and $L$ the length of the cars.
We consider $W_u^{i,i+1}(z) = v_0 \log(z)$ and $ W_u^{i,i+1}(z) = v_0 (1 - 1/z) $for $z > 0.$ The task of the parameter identification is to estimate $\con = (L, v_0) \in \mathbb R^2.$

\subsubsection{Traffic dynamic with artificial neural network}
The model driven by the feed-forward neural network is given as follows. We parametrize the interactions of the follower-leader dynamic by $W_\con,$ where $\con = (v_0, \con_\text{Net})$ and $\con_\text{Net}$ is assumed to contain all the information of the neural network. The dynamic is then given by
\begin{subequations}\label{eq:NNvelo}
	\begin{align}
		&\frac{d}{dt} x_i(t) = W^{i,i+1}_\con(x_{i+1}(t) -  x_j(t)), \quad i=1,\dots, N-1, \\
		&\frac{d}{dt} x_N(t) = v_0. 
	\end{align} 
\end{subequations}
supplemented with initial data $x(0) = x_0.$ 
\begin{remark}
	Note that we have to prescribe some value for the first vehicle even in this case, as we assume that the behaviour of cars depends on the distance to the vehicle in front and the first vehicle has no one in front.
\end{remark}

\subsection{Data processing of traffic data set}
We use the microscopic traffic data set that was recorded within the project ESIMAS \cite{dataTraffic}. It contains vehicle data from 5 cameras that were placed in a $1 km$ tunnel section on the German motorway A3 nearby Frankfurt / Main. For the parameter estimation we extract sequences from the data that contain three or more vehicles in one lane. For simplicity, we restrict our considerations to the middle lane data and neglect the data of the $y-$coordinate. Thus, we have one-dimensional traffic data for the parameter estimation.

First, we interpolate the position data that is supplemented with time stamps to a reference time discretization. Having all the data aligned to the reference time discretization, we filter sequences of data where two or more vehicles are present in the camera frame. This yields a database with various sequences of different length and with different number of vehicles that we use for the parameter identification. 

Note that after this data extraction, the vectors containing the positions of the cars for each sequence are ordered. In fact, we pass this ordered vector to the neural network and hold on to the assumption that the interaction of the vehicles depends on the distance to the vehicle in front. This is why we have to prescribe some velocity for the first car in the data set, even in the case of the neural network without physical parameters, see \eqref{eq:NNvelo}.

\begin{remark}
	Note that even though the parameter set gives data on the vehicle type, we do not use this information in the approach. Therefore, we expect to obtain an averaged length $L$ as output of the parameter estimation for the LWR-model with logarithmic and linear velocity function.
\end{remark}

\subsection{Numerical schemes}
For both approaches, the LWR and the NN model, we solve all the parameter identification problems with the stochastic gradient descent method discussed in Section~\ref{sec:stoDescent}. The forward and adjoint models are integrated using an Explicit Euler scheme. The traffic data set has time step $dt_\text{data}=0.2$ for the simulation we use a finer discretization, i.e., $dt = 0.002$. 

\subsection{Numerical results}
For the numerical results we use the two data sets of "day 1" of all five cameras. We test three different neural network settings with two, four and ten neurons in the hidden layer and call them, N2, N4, N10, respectively. The results corresponding to the LWR-model with logarithmic velocity function are denoted by "Log" and the ones obtained with the linear velocity function are denoted by "Lin".

Note that the provided data units are meter $m$ and seconds $s$. We initialize the velocity $v_0$ and car length $L$ for the LWR cases with $v_0 = 30 \frac{m}{s}$ and $L = 5 m.$ The same initial velocity is used for all neural network dynamics. The weights for the neural networks are initialized with random values uniformly distributed in $[-1,1]$. We set a lower bound for the car length $L_{\min} = 2 m.$
\begin{figure}[ht!]
	\includegraphics[scale=0.3]{./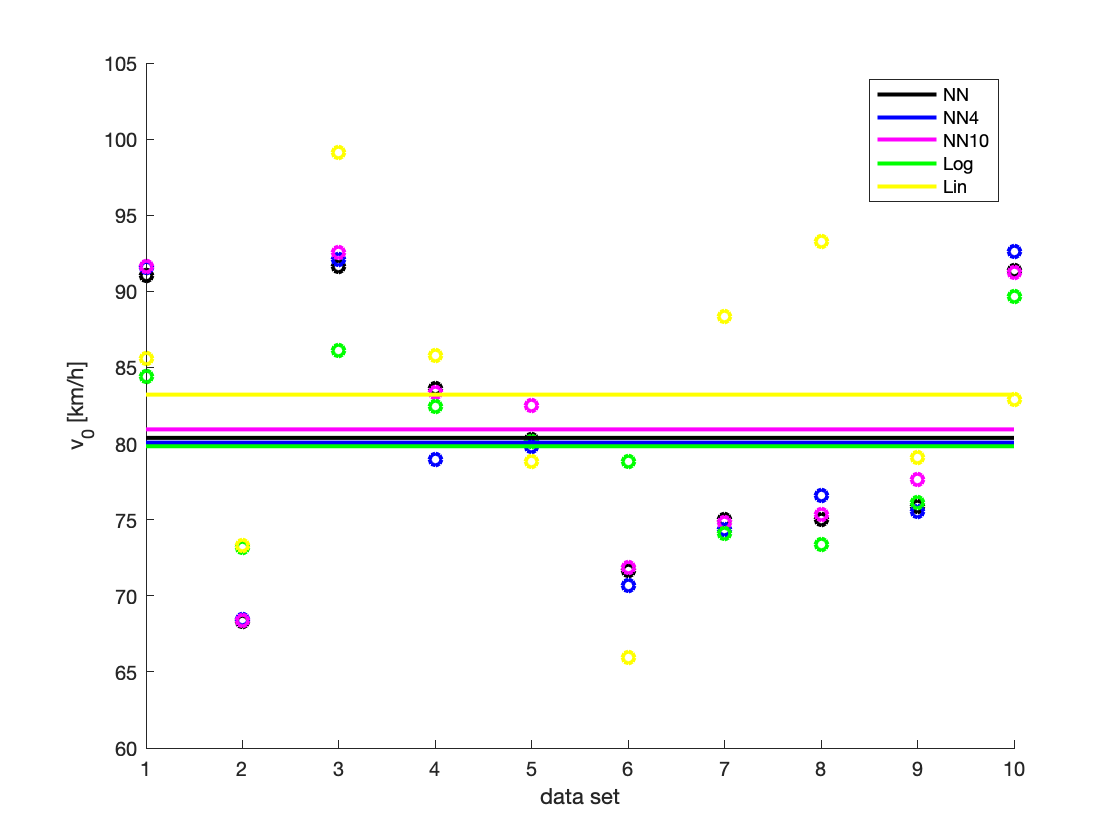}	
	\caption{The circles show the identified velocities of the leading car for the different methods (colour coded) and the different data sets. The lines show the mean velocities taken over all data sets for the different methods.}
	\label{fig:veloLength}
\end{figure}
Figure~\ref{fig:veloLength} shows the results of the identification for the different models. The circles show the identified velocities of the leading car for the different methods and the different data sets. The lines show the mean velocities taken over all data sets for the different methods. The mean velocity of the LWR-model with linear velocity function is slightly larger than the other mean velocities which are close to $80 \frac{km}{h}$. The maximal speed allowed on the highway is $100\frac{km}{h}.$ We see differences between the models for single data sets. The car lengths identified for the two LWR approaches are shown in Table~\ref{tab:length}.
\begin{table}[ht!]
	\begin{tabular}{l | c | c | c | c | c | c |c | c | c | c | c | c |}
		& 1 & 2 & 3 & 4 & 5 & 6 & 7 & 8 & 9 & 10 & average \\
		\hline
		Lin & 3.47 &    3.95 &    5.64 &    4.50 &    2.25 &    2.54 &    8.33 &    7.07 &   2.00 &    7.16 &   4.69  \\
		\hline
		Log & 8.24 &   7.92  &  9.20 &   9.90  &  7.17 &   6.63 &   9.98 &   9.91 &    5.52 &    9.76   & 8.43 \\
		\hline 
	\end{tabular}
	\vspace{0.5em}
	\caption{ Car lengths (in $m$) estimated with the algorithm for the 10 data sets with the LWR-model with linear and logarithmic velocity.}
	\label{tab:length}
\end{table}

 The average car length estimated with the linear velocity approach is $4.69 m$ and the one for the logarithmic velocity function is $8.43 m.$ These numbers may indicate that the linear model estimates the true car length, while the logarithmic approach includes the distance to the next car into this value. For test case 9 the car length of the linear model hits the lower bound $L_{\min}.$ Out of curiosity we dropped the lower bound assumption in this case, leading to a value of $0.6 m$ and an over all average velocity close to $80 \frac{km}{h}$ similar to the other models. 
\begin{table}[ht!]
	\begin{tabular}{l | c | c | c | c | c | c |c | c | c | c | c | c |}
		& 1 & 2 & 3 & 4 & 5 & 6 & 7 & 8 & 9 & 10 & average \\
		\hline
		N2 & 54.68 &  42.10 &  98.67 &  33.38 & 24.10 &  20.01  & 39.73 &  53.09 &  10.37 &  74.27 & 45.04\\
		\hline
		N4 & 54.25 &  \textbf{40.98}  & 108.80 &  \textbf{33.12}  & \textbf{20.68}  & \textbf{18.26} &  40.43 &  \textbf{53.05}  &  \textbf{8.11} &  72.02 & \textbf{44.97} \\
		\hline
		N10 & 50.43 &  46.02 &  \textbf{94.69} &  41.21 &  24.81 &  19.97 &  43.35 &  57.21 &   9.23  &  69.61 & 45.65 \\
		\hline
		Lin & \textbf{49.11} &   41.33 &   96.91 &   39.17 &   23.11 &   34.11 &   \textbf{39.39} &   62.30 &   15.21 &   \textbf{63.78} & 46.44 \\
		\hline
		Log & 58.88 &  50.04 & 113.69 &  71.31 &  32.40 &  59.59 &  39.68 &  57.72  & 19.84 &  89.24  & 59.24 \\
		\hline
	\end{tabular}
	\vspace{0.5em}
	\caption{Evaluations of the cost functional with the interaction forces identified by the algorithm for the 10 data sets. The least cost value for each column is highlighted.}
	\label{tab:cost}
\end{table}

Table~\ref{tab:cost} shows the cost for the different approaches using the interaction forces and parameters identified by the algorithm. In average the neural network approach with four neurons in the hidden layer performs best. In fact, all the neural network approaches perform better than the LWR-model with linear or logarithmic velocity function. The least cost for each data set and for the average are highlighted. N4 has the least cost in $60\%$ of the test cases. N10 approximates the third data set best and Lin gives the best result for three of the data sets. The linear model is closer to the NN approaches and outperforms the logarithmic model.

Figure~\ref{fig:interactionForce} illustrates the different interaction forces resulting from the parameter estimation. The interaction forces of the neural network models are rather linear except for the range $2 m - 5 m.$ The interaction forces of the linear and the logarithmic LWR-model behave different in this region, in fact, they have very steep gradients and enter a negative regime, which corresponds to slowing down. For distances in the range of $15 m - 50 m$ the linear LWR-model is close to the interaction forces of the neural networks, whereas the logarithmic LWR-model admits larger values.
\begin{figure}[ht!]
	\includegraphics[scale=0.27]{./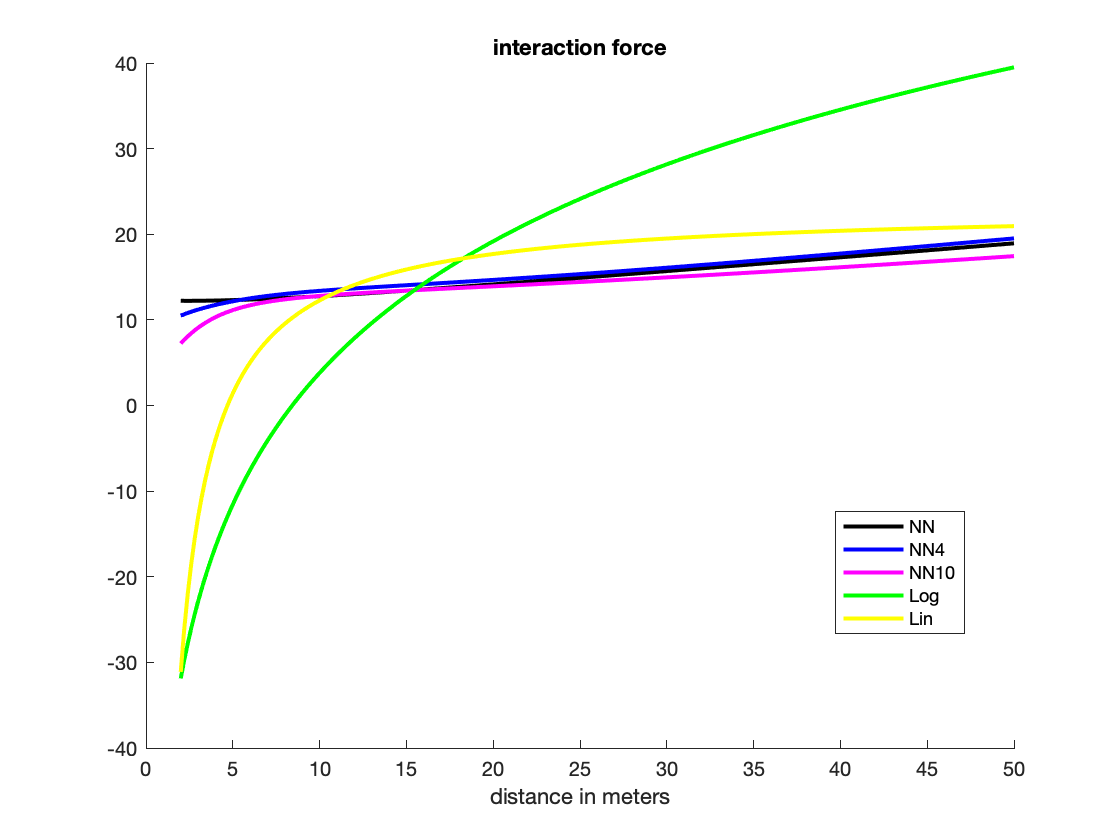}	
	\caption{Interaction forces resulting from the parameter identifications.}
	\label{fig:interactionForce}
\end{figure}

\section{Parameter estimation based on crowd data}\label{crowd}

The following parameter estimation is based on a data set provided by the Institute for Advanced Simulation:~Civil Safety Research of Forschungszentrum Jülich \cite{PedestrianData}. In particular, the data set of a bidirectional flow in a corridor is used. We employ the stochastic gradient descent algorithm to fit the parameters of the neural  network as well as the social force Model.
\subsection{Crowd data with social force Model}
We assume that the social force model proposed by Helbing and Moln\'{a}r \cite{HelbingMolnar1998} is a good fit to work with the crowd data set. It reads
\begin{subequations}\label{eq:SFM}
\begin{align}
	\frac{d}{dt} x_i &= v_i, \\
	\frac{d}{dt} v_i &= v^D_i + \frac{1}{Nm} \sum_{j=1}^N F_{i,j} + \frac{1}{Km} \sum_{k=1}^K F_{iw}
\end{align}
\end{subequations}	
supplemented with initial data $x(0) = x_0$ and $v(0) = v_0.$ The relevant parameters are given in Table~\ref{tab:parameters}.

\begin{table}
\begin{center}
\begin{tabular}{c | c }
parameter & variable  \\ \hline
mass & $m$ \\ radius & $r$ \\
relaxation time & $\tau$ \\ force constant & A \\
force constant & $B$ \\force constant & $\kappa$ \\
force constant & $k$ \\ desired velocity of $i$-th pedestrian & $v_i^\text{des}$  \\
number of wall discretization points & $M$ \\ number of pedestrians & $N$ \\ \hline
\end{tabular}
\end{center}
\caption{Parameters used for the parameter estimation based on crowd data. For the parameter identification we fix values for $m, r, \tau,M,N,B$ and seek to find $A,\kappa$ and $k$.}
\label{tab:parameters}
\end{table}
\medskip

The relaxation term towards the desired velocity $v_i^\text{des}$ is constructed from the given trajectories as follows
\begin{equation}\label{eq:relaxation} 
v^D_i = \frac{1}{\tau} \big( v_i^\text{des}(t)  - v_i(t)\big), \quad \text{where} \quad
 v_i^\text{des}(t) = \frac{x_D - x_i(t) }{\| x_D - x_i(t) \|} \| v_i(t) \|. 
 \end{equation}
Here, we assume that each pedestrian tries to head towards his or her destination which is given by the last position of the data sequence and keeps the current speed. The other force terms are assumed to be given as
\begin{equation}\label{eq:InteractionForceHelbing} 
F_{ij} = F(2r - d_{i,j},v_i-v_j) = \left(A \exp\Big(\frac{2r - d_{ij}}{B}\Big) + k\, h(2r-d_{ij})\right) n_{ij} + \kappa\, h(2r-d_{ij}) \Delta v_{ji}^t t_{ij} 
\end{equation}
with
\begin{gather*}
d_{ij} = \|  x_i - x_j\|,\quad n_{ij} = \frac{x_i - x_j}{d_{ij}}, \quad t_{ij} = (-n_{ij}^2, \quad n_{ij}^1),\quad \Delta v_{ji}^t = (v_j - v_i) \cdot t_{ij} 
\end{gather*}
being the distance of pedestrian $i$ and pedestrian $j,$ the normalized vector pointing from pedestrian $j$ to pedestrian $i,$ the tangential direction and the tangential velocity difference, respectively, and,
$ h(y) = H(y)\, y $ with Heaviside function $H.$ The interaction with the walls is parametrized using
$$ F_{iw} =  F(r - d_{iw},v_i) = \left(A \exp\Big(\frac{r - d_{iw}}{B}\Big) + k\, h(r-d_{iw})\right) n_{iw} + \kappa\, h(r-d_{iw}) (v_i \cdot t_{iw}) t_{iw}. $$
We assume here and in the following that the walls consist of stationary points, such that $d_{iw}, n_{iw}$ and $t_{iw}$ are easy to compute. We fix the radius $r = 0.25,$ the scaling $B = 0.1,$ the relaxation parameter $\tau = 0.5$ and the mass $m=1$ for all simulations.
To summarize, the relevant parameters to find via the estimation procedure are $\con = (A,k,\kappa).$

\subsection{Crowd data with neural networks}
As we aim to compare the results of the model-based approach to the neural network approach, we pass the same data to neural network that models the acceleration. Indeed, we assume to neural network dynamic to be given by
\begin{subequations}\label{eq:NNseparate}
\begin{align}
	\frac{d}{dt} x_i &= v_i, \\
	\frac{d}{dt} v_i &= v^D_i + \frac{1}{mN} \sum_{j=1}^N W_u^{i,j}(x_i-x_j,v_i-v_j) + \frac{1}{mN_\text{wall}} \sum_{k=1}^{N_\text{wall}} W_u^{i,w}(x_i-x_k,v_i-v_k)
\end{align}	
\end{subequations}
supplemented with initial data $x(0) = x_0$ and $v(0) = v_0.$ 
 Here, we refer with $x_k$ to the positions of the discretization points of the wall and with $v_k$ to artificial velocity vectors of the wall points for $k=1,\dots,N_\text{wall}$. Using separate neural networks for the interaction and the walls allows us to compare the structure of the resulting terms one by one and to understand the two approaches in more detail. 
 Note that we use the same neural network with different normal vectors for the different walls. As for the social force approach we fix the relaxation parameter $\tau = 0.5$. The other parameters need to be estimated. For notational convenience, we define
$$ \con = (\con_\text{NN}^\text{int},\con_\text{NN}^\text{wall}),$$
where $\con_\text{NN}^\text{int}$ denotes all the parameters involved in the neural network modelling the pairwise interaction between pedestrians and $\con_\text{NN}^\text{wall}$ the parameters of the neural network modelling the interaction of with the walls, respectively.

\subsection{Processing of the crowd data set}
The crowd data set is processed with the same approach as the traffic data sets. Indeed, we split it into sequences of fixed length $T_\text{seq}$. Then we consider only the pedestrians that are present during the whole sequence. In fact, we obtain $T/T_\text{seq}$ sequences with a different number of pedestrians present. This allows us to use a fixed number of pedestrians in the models for each computation of the gradient in the parameter estimation. We expect to have only small errors raised by the fact that some of the pedestrians are neglected.

After the data processing we have the trajectories of all pedestrians present in each sequence. We use the first and the last point of these trajectories to compute the relaxation term \eqref{eq:relaxation} in each time step. Moreover, we compute the velocities of the pedestrians using a finite difference approximation. This applies to both, the social force and the NN approach. 

\subsection{Numerical schemes and parameters}
We use the Explicit Euler scheme to solve the state system and the adjoint systems. The information is then passed to the stochastic descent algorithm, see Section~\ref{sec:stoDescent}.

Each sequence of the preprocessing involved $25$ time steps of length $dt = 0.04,$ leading to $T_\text{seq}=1.$ We use the same time step $dt = 0.04$ for the Euler method. The parameter of the stochastic descent algorithms are given in Section~\ref{sec:stoDescent}. The values for the initial neural  networks are a random sample that is chosen independently and uniformly distributed on $[-1,1]^K.$ We have four input variables, a hidden layer with four neurons and two output variables representing the interaction force in $x$- and $y$-direction. 

The initial values for the social force model are a random sample uniformly distributed in the interval $[0,50]^3.$
As the parameters of the social force model are assumed to be non-negative, we set them to zero, if they became negative in some iteration of the gradient descent algorithm.

\subsection{Numerical results}
In the following we discuss the results of the parameter identification that we computed with the help of the stochastic gradient descent methods derived in Section~\ref{sec:stoDescent}.
\subsubsection{Results for social force model}
First, we discuss the numerical results obtained for the social force model. We begin with plots similar to \cite{GoettKnapp}, where we show the results for the interaction forces by four contour plots for the $x$-direction and for the $y$-direction. For each of the plots we fix a vector $v = (v_1,v_2)$ which describes the difference of the velocity vectors of two interaction pedestrians. For example, a pedestrian with velocity vector $(1,0)$ interacting with a pedestrian with velocity vector $(-1,0)$ leads to a difference vector $v = (2,0).$ The contour colours show the value of the force components. Negative values correspond to repulsive forces and positive values to attraction force.

\begin{figure}
	\includegraphics[scale=0.3]{./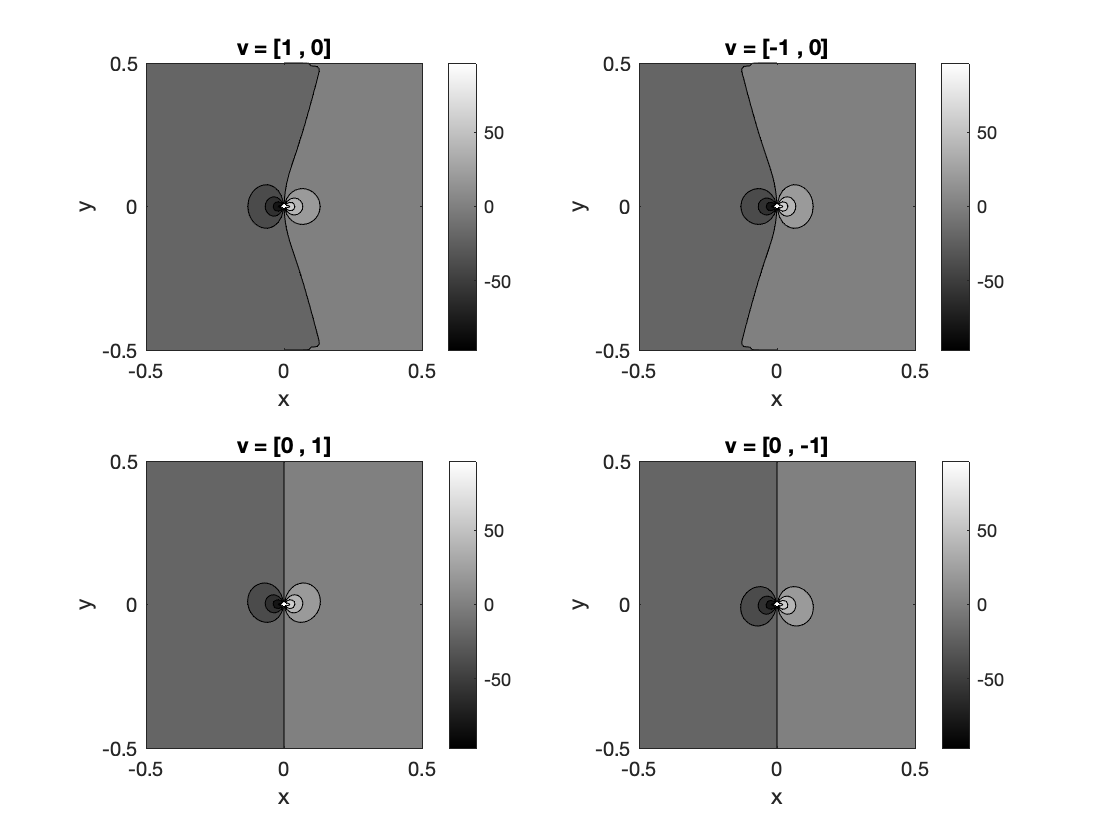}
	\caption{First component of the interaction force for social force approach with optimized parameters.}	
	\label{fig:HelbingG1}	
\end{figure}

\begin{figure}
	\includegraphics[scale=0.3]{./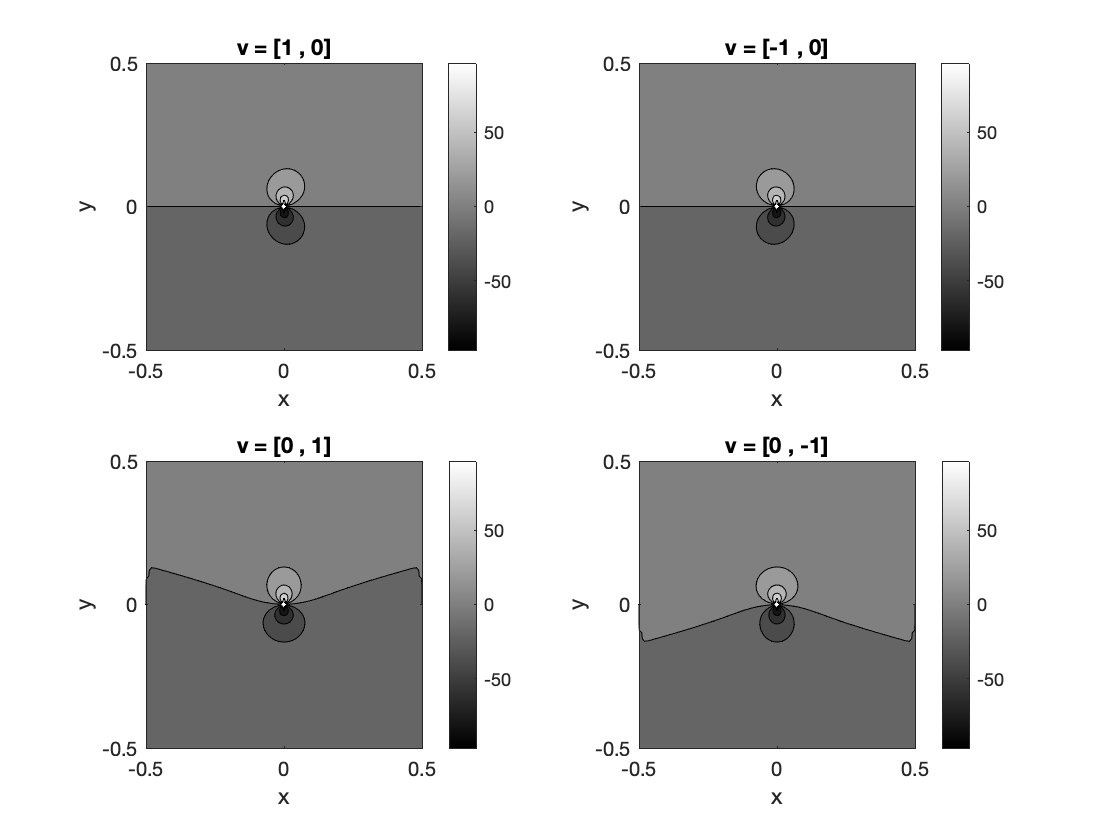}
	\caption{Second component of the interaction force for social force approach with optimized parameters.}	
	\label{fig:HelbingG2}	
\end{figure}

The optimal parameters estimated are $A = 0.0044, k=34.9539, \kappa = 9.8894.$ It is interesting to note that $A,$ the prefactor of the exponential term, is almost switched off by the optimization. In other words, only the $k$- and $\kappa$-term are active, this reduces the interaction to a very short range, as both terms are multiplied by the Heaviside function $H(2r - d),$ compare to \eqref{eq:InteractionForceHelbing}. We therefore restrict the area of the contour plot in Figure~\ref{fig:HelbingG1} and Figure~\ref{fig:HelbingG2} to $[-0.5,0.5]^2.$
In fact, we even see in Figures~\ref{fig:HelbingG1} and ~\ref{fig:HelbingG2} that strong forces occur in even shorter ranges around zero. In addition to these plots, we illustrate interaction forces for six settings in Figure~\ref{fig:6Helbing}. Every subplot show two interacting pedestrians at the positions marked with a blue and a red dot, respectively. Moreover, the velocity vectors of the pedestrians are shown in blue and red as well. The black arrow is the force vector resulting from the interaction of the two.

\begin{table}
\begin{tabular}{| c | c | c | c | c | c | c | c | }
	\hline 
	 & $x_\text{blue}$ & $x_\text{red}$ & $v_\text{blue}$ & $v_\text{red}$ & $\text{force}_\text{blue}$ & $\text{force}_\text{red}$ \T\B \\
	 \hline 
	S1 & $(0;0.22)$ & $(0;-0.22)$ & $(0;-1)$ & $(0;1)$ & $(0;2.1118)$ & $(0;-2.1118)$ \T\B\\
	S2 & $(0;0.22)$ & $(0;-0.22)$ & $(0;1)$ & $(0;1)$ & $(0;2.1118)$ & $(0;-2.1118)$ \T\B\\
	S3 & $(0.01;0.22)$ & $(-0.01;-0.22)$ & $(0;-1)$ & $(0;1)$ & $(0.0417;2.0961)$ & $(-0.0417;-2.0961)$ \T\B \\
	S4 & $(0.01;0.22)$ & $(-0.01;-0.22)$ & $(0;1)$ & $(0;1)$ & $(0.0952;2.0936)$ & $(-0.0952;-2.0936)$ \T\B\\
	S5 & $(0.22;0)$ & $(-0.22;0)$ & $(0;-1)$ & $(0;1)$ & $(2.1118;1.1867)$ & $(-2.1118;-1.1867)$ \T\B\\
	S6 & $(0.22;0)$ & $(-0.22,0)$ & $(0,1)$ & $(0,1)$ & $(2.1118;0)$ & $(-2.1118;0)$ \T\B\\
	\hline 
\end{tabular}
\vspace*{4mm}
\caption{Initial data for the study of the forces shown in Figure~\ref{fig:6Helbing}.}
\label{tab:data6settings}
\end{table}	

\begin{figure}
\includegraphics[scale=0.4]{./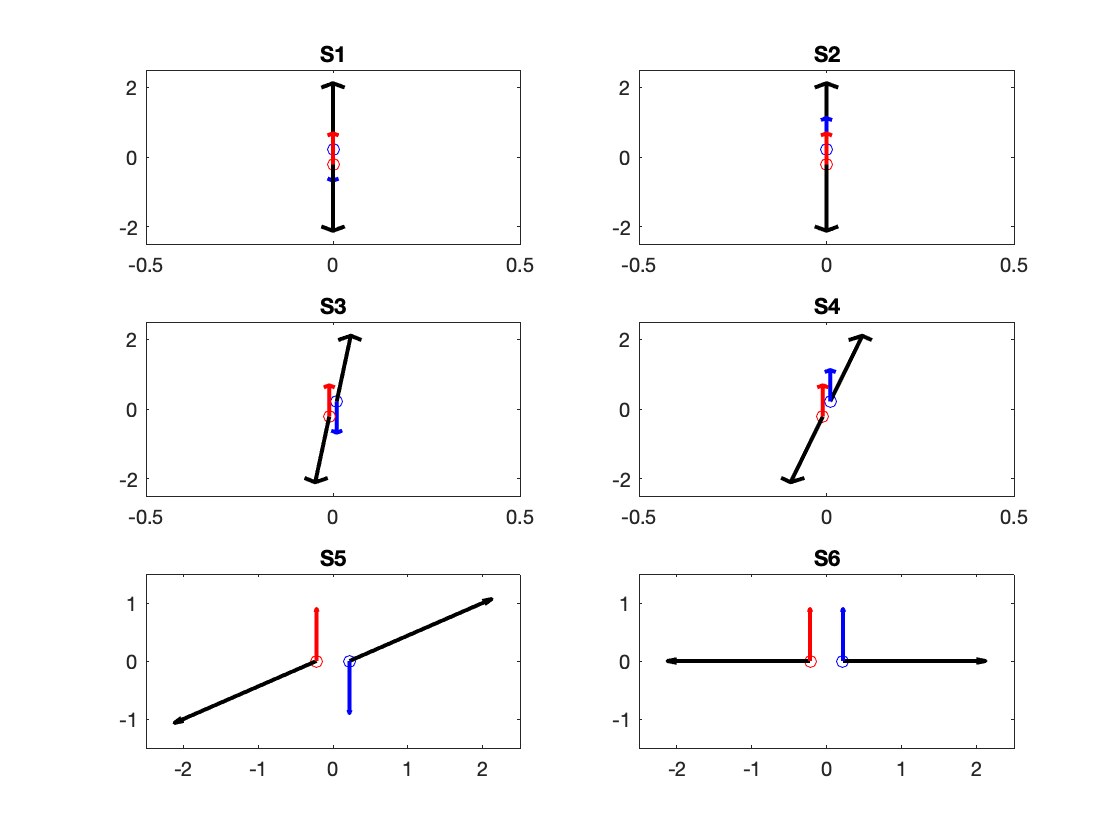}
\caption{Interaction forces resulting from social force model for different settings, see Table~\ref{tab:data6settings} for more details. The positions of the two interacting pedestrians is marked by the blue and red dot, respectively. Their velocity vectors are the blue and red arrows and the resulting forces are depicted as black arrows. On the left side the pedestrians face each other, on the right side the pedestrians walk in the same direction.}
\label{fig:6Helbing}
\end{figure}

The data of the different settings is given in Table~\ref{tab:data6settings}.
Setting S1 and S2 show a well-known problem of interaction schemes with radially symmetric forces. The interaction forces of two pedestrians encountering each other are aligned along the vector of the differences of the pedestrians positions. Therefore, there is no evasive behaviour visible in this study. Nevertheless, in combination with the relaxation term that drives the pedestrian towards their desired destination, we expect to have a reasonable model. This problem for radially symmetric interactions is well-known, and reported for example in \cite{CollisionAvoidance}. Therein, an approach to solve this issue presented as well. 

For slightly shifted positions (see S3 and S4) we see evasion behaviour of the pedestrians. The pedestrians are slowing down and slightly moving to the left or right, respectively. The force directions are similar for the case where the velocity vectors point at each other and also when the velocity vectors are aligned. In the settings S5 and S6 we see how two pedestrians walking next to each other interact. In both settings there is a strong repulsion that pushes the pedestrians away from each other. The walls have no influence on this behaviour, as their influence is only in very short range and the positions are centred in the domain.
Altogether, the resulting forces are reasonable. They model some kind of evasive behaviour, which is expected as they become active only if the two interacting pedestrians are very close together.

\subsubsection{Results for NN model}
Let us now discuss the interactions based on the neural network model. 
We consider the same plots as for the social force model above and combine the forces resulting from the neural network modelling the interaction with the forces resulting from the walls.  Figures \ref{fig:NN_G1} and \ref{fig:NN_G2} show contour plots of the forces resulting from the neural network approach when we sum up the interaction force and the wall force. Comparing these Figures to the ones resulting from the social force model, we see that the interaction strength is smaller but the interaction range is longer for the NN model. We analysed the forces resulting from the interactions and the walls separately as well, it turns out that the forces to not represent pairwise interaction between pedestrians and walls, as we intended to model.

\begin{figure}
	\includegraphics[scale=0.3]{./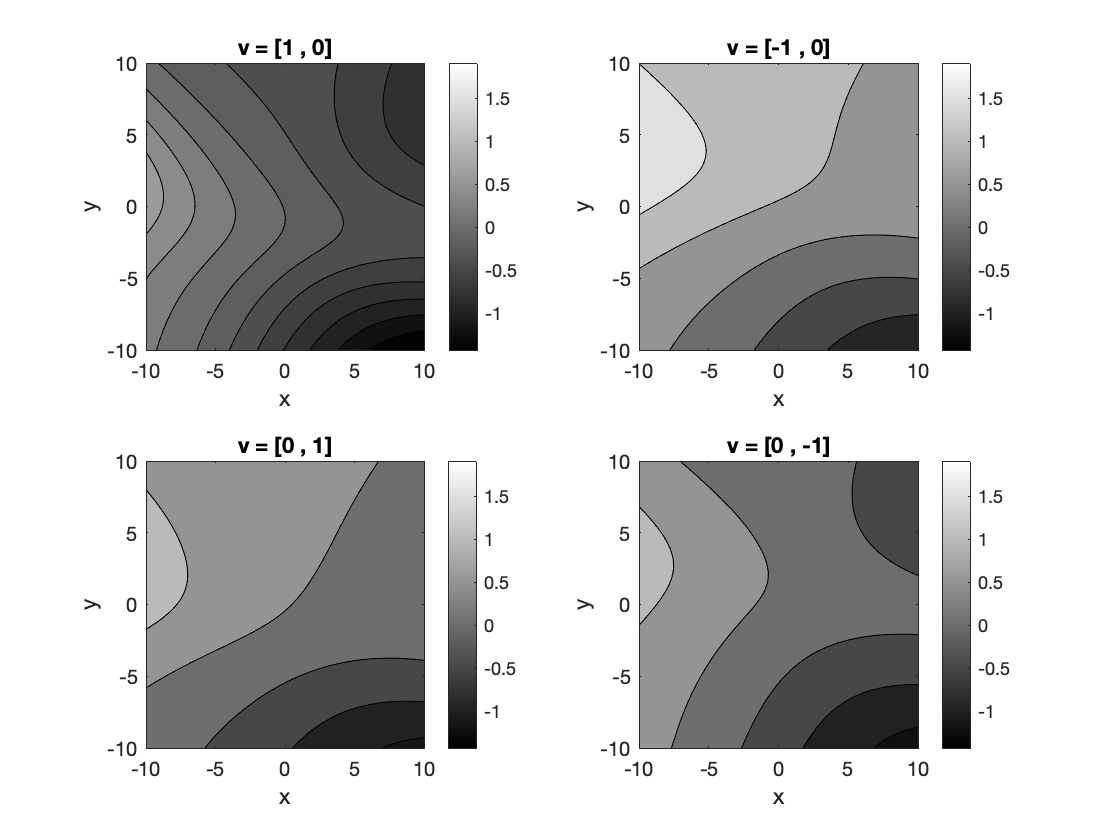}
	\caption{First component of the interaction and wall force for NN approach with optimized parameters.}	
	\label{fig:NN_G1}	
\end{figure}

\begin{figure}
	\includegraphics[scale=0.3]{./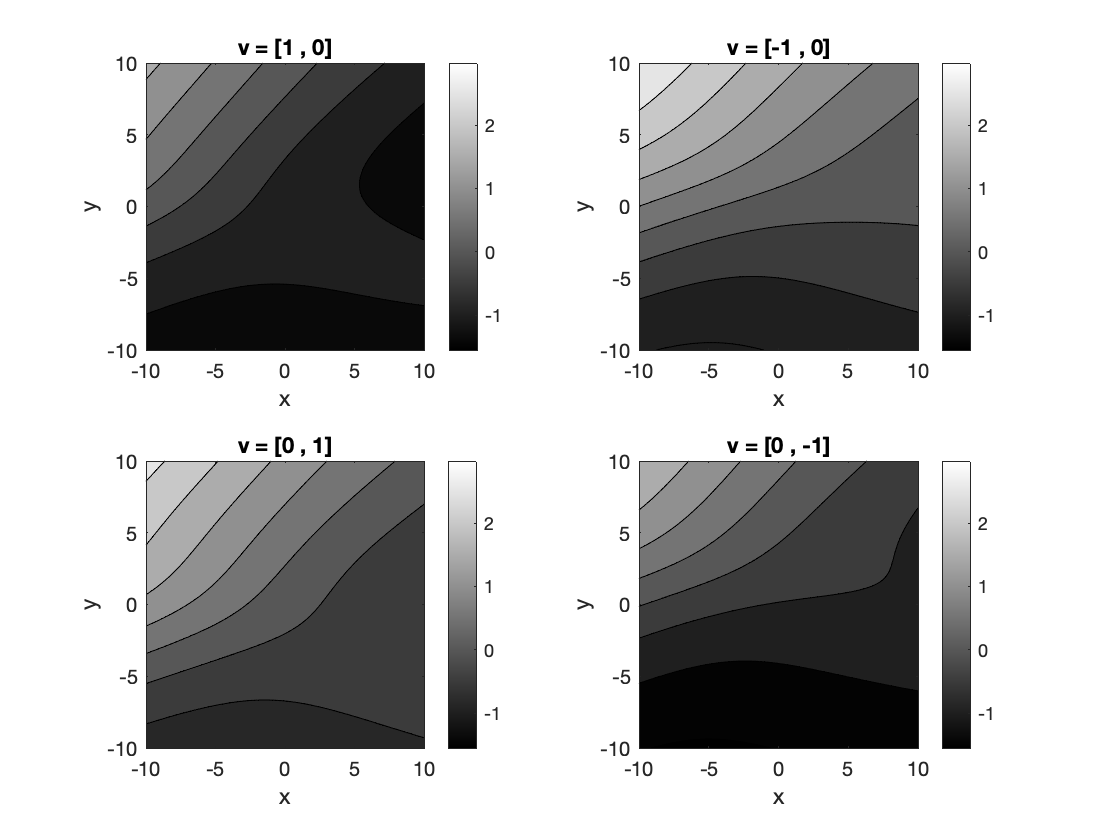}
	\caption{Second component of the interaction and wall force for NN approach with optimized parameters.}	
	\label{fig:NN_G2}	
\end{figure}

The observation that the interaction range is longer in the NN approach motivates to adapt the ranges of the interaction settings. For a similar study as before for the social force model, we use the values given in Table \ref{tab:data6settingsNN}. The arrows in Figure \ref{fig:6NN} show the sum of the interaction and wall forces, i.e., they correspond to Figure \ref{fig:NN_G1} and \ref{fig:NN_G2}. The force vectors in the first and second row coincide. This indicates that the NN model does not have the same problem as forces resulting from radially symmetric potentials. Moreover, the plots indicate that the NN model reacts with evasive behaviour to pedestrians approaching in a longer range (row one and two in Figure \ref{fig:6NN}). In contrast, pedestrians that are close to each other encounter forces in similar directions. Especially, when they head into the same direction (row three in Figure \ref{fig:6NN}). We summarize the findings and comparison of the two models in the conclusion.

\begin{table}
	\begin{tabular}{| c | c | c | c | c | c | c | c | }
		\hline 
		& $x_\text{blue}$ & $x_\text{red}$ & $v_\text{blue}$ & $v_\text{red}$ & $\text{force}_\text{blue}$ & $\text{force}_\text{red}$ \T\B \\
		\hline 
		S1 & $(0;4)$ & $(0;-4)$ & $(0;-1)$ & $(0;1)$ & $(0.1794;0.1517)$ & $(-0.2230;-0.1856)$ \T\B\\
		S2 & $(0;4)$ & $(0;-4)$ & $(0;1)$ & $(0;1)$ & $( 0.4607;0.7512)$ & $( -0.2557;-1.0052)$\T\B\\
		S3 & $(0.01;4)$ & $(-0.01;-4)$ & $(0;-1)$ & $(0;1)$ & $(0.1782;0.1496)$ & $(-0.2218; -0.1858)$ \T\B \\
		S4 & $(0.01;4)$ & $(-0.01;-4)$ & $(0;1)$ & $(0;1)$ & $( 0.4602; 0.7487)$& $(-0.2539;-1.0055)$\T\B\\
		S5 & $(0.5;0)$ & $(-0.5;0)$ & $(0;-1)$ & $(0;1)$ & $(0.1742;-0.7027)$ & $(0.5133;0.8687)$\T\B\\
		S6 & $(0.5;0)$ & $(-0.5,0)$ & $(0,1)$ & $(0,1)$ & $(0.4605;-0.2639)$ & $(0.5612;-0.1540)$ \T\B\\
		\hline 
	\end{tabular}\\
	\vspace*{4mm}
	\caption{Initial data for the study of the interaction and wall forces shown in Figure~\ref{fig:6NN}.}
	\label{tab:data6settingsNN}
\end{table}	

\begin{figure}
	\includegraphics[scale=0.4]{./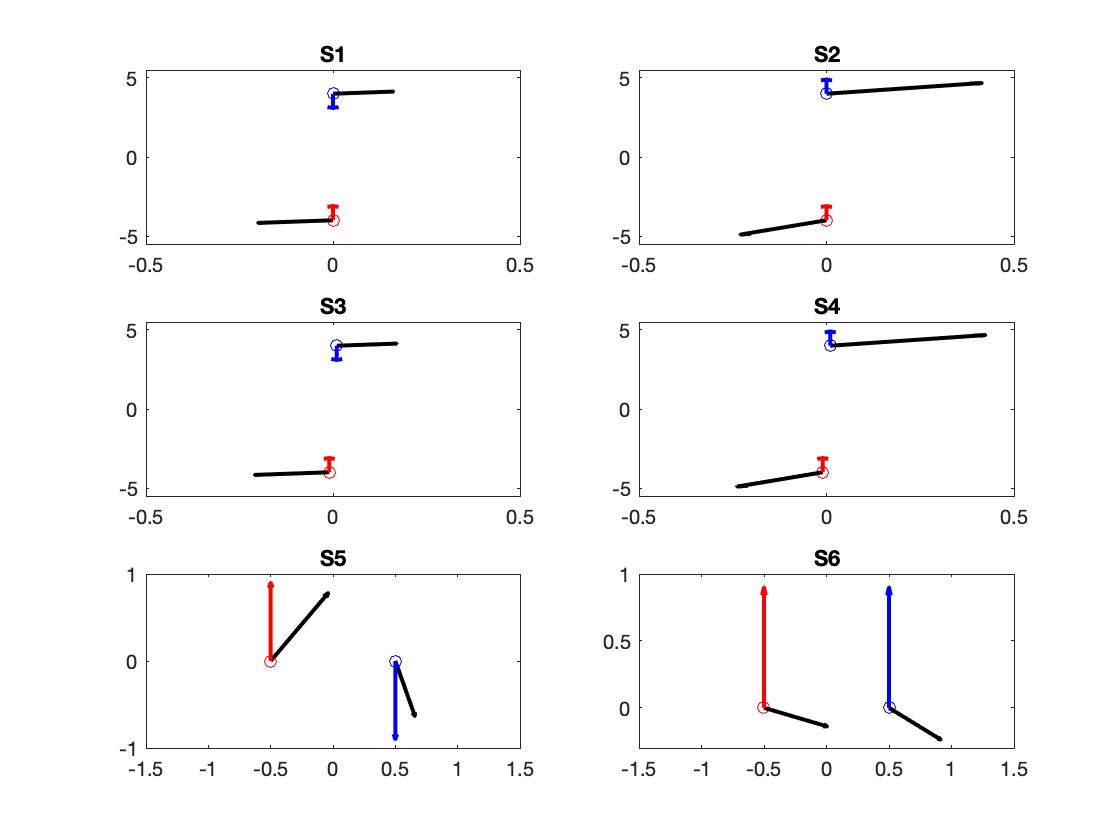}
	\caption{Interaction and wall forces resulting from NN model for different settings, see Table~\ref{tab:data6settingsNN} for more details. The positions of the two interacting pedestrians is marked by the blue and red dot, respectively. Their velocity vectors are the blue and red arrows and the resulting forces are depicted as black arrows. On the left side the pedestrians face each other, on the right side the pedestrians walk in the same direction.}
	\label{fig:6NN}
\end{figure}

\subsection{Comparison of the two approaches}
The numerical results show, that the optimized forces on the social model have a very short range and show expected evasive behaviour for two pedestrians facing each other. On the other hand, the optimized forces of the neural network approach are long ranging. The splitting into a neural network for pairwise interactions and wall interactions is overturned by the optimization. Interestingly, it turns out that a familiar evasive behaviour is recovered when we add the forces resulting from the neural network for pairwise interaction and the neural network modelling the walls. 

The comparison of pairwise interactions show that in case of the social force model, the pedestrians encounter a strong force slowing them down, in particular, the component of force vector pointing in the opposite direction of the velocity vector is very strong. For the neural network, the force vector is almost perpendicular to the velocity vector. Hence, the pedestrians are changing direction but not slowing down as much as in the social force case.
The cost functional values for the two approaches averaged over all samples of the data set are
$$ J_\text{SF} = 5.0841, \qquad J_\text{NN} = 5.5979. $$
We conclude that despite the qualitative differences the two models perform similar in terms of cost. The social force model with short range interaction performs slightly better in this measure as the neural  network approach with long range interaction.

\section{Conclusion}
We proposed to use neural networks to model forces in general interacting particle systems and derived an algorithm to estimate parameters with techniques from optimal control. For validation the algorithm is applied to a traffic data set and a pedestrian data set. The results are compared to the well known LWR-model and social force model, respectively. 

For the traffic data, it turns out that the neural network approach leads to almost linear interaction forces that average the interaction forces resulting for the LWR-model with linear and logarithmic velocity ansatz. The cost functional values are best for the neural  network with 4 neurons in the hidden layer.

In case of the pedestrian dynamics the interaction forces of the social force model and the neural  network approach differ in the range of interaction and the strength. The optimized social force model has strong interaction forces that act on a very short range. In contrast, the forces resulting from the neural  network approach act on a longer range with less strength. Parameter identification with social model performs slightly better than the one based on neural networks in terms of the cost functional value.

Future work includes the investigation of parameter identification problems using neural networks for partial differential equations arising in the context of crowd motion and traffic flow. A performance comparison of the  stochastic gradient descent method to global optimization methods, such as Consensus-based global Optimization \cite{CBO1}, using real data based parameter calibration for interacting particle models is planned as well.

\section*{Acknowledgements}
We thank the project ESIMAS \cite{dataTraffic} and the Institute for Advanced Simulation \cite{PedestrianData} for providing the open source data for traffic and pedestrian dynamics which enabled us to perform the numerical studies of this article.
S.G.~acknowledges support from the DFG within the project GO1920/10-1. 
C.T.~was supported by the European social Fund and by the Ministry Of Science, Research and the Arts Baden-W\"urttemberg.

\bibliographystyle{plain}
\bibliography{biblio,referencesDissSK,BibBookChapter}

\end{document}